\title{Sutured instanton homology and Heegaard diagrams}
\author{John A. Baldwin}
\address{Department of Mathematics\\Boston College}
\email{john.baldwin@bc.edu}
\author{Zhenkun Li}
\address{Department of Mathematics\\Stanford University}
\email{zhenkun@stanford.edu}
\author{Fan Ye}
\address{Department of Pure Mathematics and Mathematical Statistics\\University of Cambridge}
\email{fy260@cam.ac.uk}
\thanks{JAB was supported by NSF CAREER Grant DMS-1454865 and NSF FRG Grant DMS-1952707.}
\newtheorem*{rep@theorem}{\rep@title}
\newcommand{\newreptheorem}[2]{%
\newenvironment{rep#1}[1]{%
 \def\rep@title{#2 \ref{##1}}%
 \begin{rep@theorem}}%
 {\end{rep@theorem}}}
\newtheorem {theorem}{Theorem}
\newtheorem {lemma}[theorem]{Lemma}
\newtheorem {proposition}[theorem]{Proposition}
\newtheorem {corollary}[theorem]{Corollary}
\numberwithin{equation}{section}
\numberwithin{theorem}{section}
\theoremstyle{definition}
\newtheorem{remark}[theorem]{Remark}
\newtheorem*{remark*}{Remark}
\newlist{pcases}{enumerate}{1}
\setlist[pcases]{
  label=\bf{Case~\arabic*:}\protect\thiscase.~,
  ref=\arabic*,
  align=left,
  labelsep=0pt,
  leftmargin=0pt,
  labelwidth=0pt,
  parsep=0pt
}
\newcommand{\case}[1][]{%
  \if\relax\detokenize{#1}\relax
    \def\thiscase{}%
  \else
    \def\thiscase{~#1}%
  \fi
  \item
}
\newcommand{\Z}{\mathbb{Z}}
\newcommand{\rk}{\textrm{rk}}
\newcommand{\C}{\mathbb{C}}
\newcommand{\Q}{\mathbb{Q}}
\newcommand{\Cone}{\textrm{Cone}}
\newcommand{\T}{\mathbb{T}}
\newcommand{\gen}{\mathfrak{S}}
\DeclareMathOperator{\Sym}{Sym}
\newcommand\SFH{\mathit{SFH}}
\newcommand\SFC{\mathit{SFC}}
\newcommand\SHM{\mathit{SHM}}
\newcommand\SHI{\mathit{SHI}}
\newcommand\KHI{\mathit{KHI}}
\newcommand\HF{\widehat{\mathit{HF}}}
\newcommand\HFK{\widehat{\mathit{HFK}}}
\DeclareFontFamily{U}{mathx}{\hyphenchar\font45}
\DeclareFontShape{U}{mathx}{m}{n}{
      <5> <6> <7> <8> <9> <10>
      <10.95> <12> <14.4> <17.28> <20.74> <24.88>
      mathx10
      }{}
\DeclareSymbolFont{mathx}{U}{mathx}{m}{n}
\DeclareMathAccent{\widecheck}{0}{mathx}{"71}
\newcommand{\Heeg}{\mathcal{H}}
\newcommand{\Met}{\mathcal{T}}
\newcommand{\D}{\mathbb{D}}
\tikzset{every picture/.style=thick}
\tikzset{link/.style = { white, double = black, line width = 1.75pt, double distance = 1.25pt, looseness=1.75 }}
\tikzset{crossing/.style = {draw, circle, dotted, minimum size=0.5cm, inner sep=0, outer sep=0}}
\pgfplotsset{compat=1.12}
\begin{document}

\begin{abstract}
Suppose $\Heeg$ is an admissible Heegaard diagram for a balanced sutured manifold $(M,\gamma)$. We prove that the number of generators of the associated sutured Heegaard Floer complex  is an upper bound on the dimension of the sutured instanton homology $\SHI(M,\gamma)$. It follows, in particular, that strong L-spaces are instanton L-spaces.

\end{abstract}

\maketitle

\section{Introduction} \label{sec:intro}

Let $(M,\gamma)$ be a balanced sutured manifold. Kronheimer and Mrowka conjectured   \cite{km-excision} that its sutured instanton homology  is isomorphic to its sutured Heegaard Floer homology, \begin{equation}\label{eqn:iso1}\SHI(M,\gamma)\cong \SFH(M,\gamma)\otimes \C.\end{equation} Proving this remains a major open problem. In particular, it would imply  isomorphisms  \begin{align*}\label{eqn:iso2}I^\#(Y)&\cong \HF(Y)\otimes \C\\
\KHI(Y,K)&\cong \HFK(Y,K)\otimes \C\end{align*} between the invariants of  closed 3-manifolds and knots  in the instanton and Heegaard Floer settings.

There has  been a  flood of recent work    proving these isomorphisms  for various     families of  closed 3-manifolds and knots; see  \cite{BSconcordance,abds,lpcs,li-ye,ghosh-li}. 
In this paper, we initiate a   systematic approach to  the general isomorphism \eqref{eqn:iso1}.
Before stating our main result, let us establish some notation. 

Given a  sutured Heegaard diagram  \[\Heeg = \big(\Sigma,\alpha = \{\alpha_1,\dots,\alpha_k\},\beta = \{\beta_1,\dots,\beta_k\}\big)\] for a balanced sutured manifold $(M,\gamma)$, let \[\T_\alpha := \alpha_1\times\dots\times\alpha_k \textrm{ and }\T_\beta:=\beta_1\times\dots\times\beta_k \subset \Sym^k(\Sigma)\] denote the usual tori in the $k$-fold symmetric product of $\Sigma$, and let  \[\gen(\Heeg) :=\T_\alpha\cap\T_\beta \subset \Sym^k(\Sigma).\] If $\Heeg$ is \emph{admissible}, then $\gen(\Heeg)$ is the set of generators for the sutured Heegaard Floer complex $\SFC(\Heeg)$ as defined by Juh{\'a}sz in \cite{juhasz-sutured}.\footnote{See \S\ref{ssec:admissible} for the definition of \emph{admissible};  every $\Heeg$ is  admissible when   $H_1(M,\partial M;\Q)=0$.} Our main theorem is the following.

\begin{theorem}
\label{thm:main}
If $\Heeg$ is an admissible  sutured Heegaard diagram for  $(M,\gamma)$, then \[\dim_\C \SHI(M,\gamma)\leq |\gen(\Heeg)|.\]
\end{theorem}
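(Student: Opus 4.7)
My strategy would be to extract from the Heegaard diagram $\Heeg$ an explicit handle decomposition of $(M,\gamma)$ and then iterate a surgery/bypass exact triangle for $\SHI$.

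First, I would realize $(M,\gamma)$ starting from the product sutured manifold $(\Sigma \times [0,1],\,\partial\Sigma \times \{1/2\})$, attaching contact $2$-handles along $\alpha_i \times \{0\}$ for $i = 1, \ldots, k$ and along $\beta_j \times \{1\}$ for $j = 1, \ldots, k$. Since $\SHI$ of a product sutured manifold is one-dimensional over $\C$, this exhibits $\SHI(M,\gamma)$ as the output of $2k$ explicit contact $2$-handle/bypass cobordisms applied to $\C$.

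Second, I would assemble the exact triangle for each of these handle attachments into a single iterated mapping cone (a hypercube of dimension $2k$): the $\alpha_i$'s and $\beta_j$'s provide the $2k$ cube directions, each vertex is $\SHI$ of some intermediate sutured manifold, and edges are cobordism-induced maps. The spectral sequence of the iterated cone would then bound $\dim_\C \SHI(M,\gamma)$ by a sum of vertex-term dimensions. Admissibility of $\Heeg$ would play the role of its Heegaard Floer analogue, ensuring that the relevant surgery sequence is actually finite and that no infinite tower of contributions arises.

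Third --- and this is where the main work lies --- I would need to refine this bound from the naive $2^{2k}$ to the much sharper $|\gen(\Heeg)| = \sum_{\sigma\in S_k}\prod_{i=1}^k|\alpha_i\cap\beta_{\sigma(i)}|$. The permanent-like form of this count strongly suggests pairing the $\alpha$- and $\beta$-handles through a permutation $\sigma$, with each pair $(\alpha_i,\beta_{\sigma(i)})$ contributing a factor equal to the number of local intersections. I would therefore try to reorganize the hypercube so that it splits, up to homotopy, as a direct sum indexed by $S_k$, each summand analyzed by a local model near the intersections $\alpha_i\cap\beta_{\sigma(i)}$ on $\Sigma$.

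The main obstacle is precisely this refinement. Unlike Heegaard Floer, $\SHI$ does not come equipped with a chain complex whose generators are visibly indexed by $\gen(\Heeg)$, so one cannot simply point to ``the generator at $\mathbf{x}$''. One must instead construct, from the Heegaard data, specific classes in $\SHI$ of intermediate sutured manifolds (plausibly contact classes arising from partial open book decompositions associated to matchings $\sigma$) whose images or cones account for each intersection point in $\T_\alpha\cap\T_\beta$. Showing that nothing beyond these classes survives in the final total complex --- and that each intersection point contributes at most one dimension --- is the combinatorial and geometric heart of the argument.
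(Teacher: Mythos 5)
Your proposal stalls exactly where you say it does, and the gap is not a technicality: the passage from a crude iterated-mapping-cone bound of order $2^{2k}$ to the permanent-like count $|\gen(\Heeg)|$ is the entire content of the theorem, and the hypercube-splitting you gesture at (a direct sum over $\sigma\in S_k$ of local models near $\alpha_i\cap\beta_{\sigma(i)}$) is not something you construct or even reduce to a known statement. There is no known mechanism by which the $2k$ bypass/handle triangles reorganize into a sum over matchings, and without explicit classes in the intermediate groups your ``nothing else survives'' step has no starting point. You have correctly located the heart of the problem but not supplied an argument for it.

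The paper's route is quite different and worth contrasting with yours. Instead of iterating triangles over the $2k$ handles, one drills out a \emph{full tangle} $T$: a collection of vertical arcs $p_{ij}\times[-1,1]$, with $a_i\geq 1$ arcs placed in each region $R_i$ of $\Sigma-\alpha-\beta$ disjoint from $\partial\Sigma$. After product-disk decompositions, the complement $(M_T,\gamma_T)$ becomes a union of $2k$ three-balls (the $\alpha$- and $\beta$-handles) joined by tubes $\tau_\ell$, one for each intersection point $q_\ell\in\alpha\cap\beta$. Each meridional disk $m_\ell$ of a tube meets $\gamma_T$ in four points, and the key input is that decomposing along such a disk splits $\SHI$ as a direct sum of the results of decomposing along $m_\ell$ and $-m_\ell$. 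Iterating over all tubes gives a sum over sign tuples $I\in\{+,-\}^t$, and the summand is nonzero precisely when the set $\{q_\ell\mid\epsilon_\ell=-\}$ is a generator; this is where $|\gen(\Heeg)|$ comes from, with no hypercube and no reference to $S_k$ beyond the combinatorics of which sign tuples leave one suture component on each ball. The comparison between $\SHI(M,\gamma)$ and $\SHI(M_T,\gamma_T)$ is then a separate dimension inequality for vertical tangles with $[T]=0$ in $H_1(M,\partial M;\Q)$, proved by a single surgery/bypass triangle argument applied to a carefully built auxiliary ``mixed'' tangle, not by one triangle per handle.

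Finally, your reading of admissibility is off. It is not about finiteness of a surgery sequence; it is what allows one to choose the multiplicities $a_i$ (positive integer ``areas'') so that every periodic domain has signed area zero, which is exactly the condition making the full tangle rationally nullhomologous in $(M,\partial M)$ --- the hypothesis needed for the dimension inequality. Without admissibility the theorem is false (e.g.\ the disjoint $\alpha_1,\beta_1$ diagram for $(S^1\times S^2)(1)$ has no generators but $\SHI$ of dimension $2$), so any correct proof must use admissibility in a way that your outline does not.
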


\begin{remark}
Theorem \ref{thm:main} does not hold without the assumption that $\Heeg$ is admissible; see Remark \ref{rmk:admissible}. 
\end{remark}

\begin{remark}
Our proof of Theorem \ref{thm:main} also  works for sutured monopole homology ($\SHM$) in place of $\SHI$. Of course, the $\SHM$ version  of our main result follows from the  isomorphism  \[\SHM(M,\gamma) \cong \SFH(M,\gamma),\] which is a consequence of the equivalence between monopole and Heegaard Floer homology; see \cite{lekili2}. Still, it may   be of value to know that one can prove the inequality \[\dim_\Z \SHM(M,\gamma)\leq |\gen(\Heeg)|\] without going through the proof of this equivalence.
\end{remark}

Given a balanced sutured manifold $(M,\gamma)$, we define the \emph{simultaneous trajectory number} $\Met(M,\gamma)$ to be the minimum of $|\gen(\Heeg)|$ over all admissible sutured Heegaard diagrams $\Heeg$ for $(M,\gamma)$.
This is the generalization to balanced sutured manifolds of a notion originally defined  for rational homology 3-spheres by Ozsv{\'a}th and Szab{\'o} in \cite{osz-props}. It admits   a  purely Morse-theoretic interpretation when   $H_1(M,\partial M;\Q)=0$, and is a measure of the topological complexity of $(M,\gamma)$---for example,   $\Met(M,\gamma)=1$  iff $(M,\gamma)$ is a product sutured manifold.\footnote{This is a fun exercise which we have not seen written down before; it is a generalization of the well-known fact that $S^3$ is the only closed 3-manifold with simultaneous trajectory number one \cite{osz-props}.} Further, it is clear from the definition that \[\rk_\Z\SFH(M,\gamma)\leq \Met(M,\gamma).\] We have the following immediate corollary of Theorem \ref{thm:main}.

\begin{corollary}
If $(M,\gamma)$ is a balanced sutured manifold, then \[\dim_\C\SHI(M,\gamma)\leq \Met(M,\gamma).\]  \end{corollary}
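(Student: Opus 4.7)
The plan is to deduce this corollary as an immediate consequence of Theorem \ref{thm:main}. By the definition given just before the statement,
\[\Met(M,\gamma) = \min\bigl\{|\gen(\Heeg)| : \Heeg \text{ is an admissible sutured Heegaard diagram for } (M,\gamma)\bigr\}.\]
The first step is to observe that this set is nonempty: by Juh{\'a}sz's foundational work on sutured Floer homology, every balanced sutured manifold admits an admissible sutured Heegaard diagram, so the minimum (a positive integer) is attained by some $\Heeg_0$, and $\Met(M,\gamma) = |\gen(\Heeg_0)|$.

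Applying Theorem \ref{thm:main} to $\Heeg_0$ then yields
\[\dim_\C \SHI(M,\gamma) \leq |\gen(\Heeg_0)| = \Met(M,\gamma),\]
which is exactly the desired bound. There is no independent obstacle in this step; the entire substance of the corollary is already absorbed into Theorem \ref{thm:main}.

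For context, here is what I would do for \emph{that} theorem (not asked, but where all the content lives). The plan would be to build a chain-level or filtered model for $\SHI(M,\gamma)$ indexed by the elements of $\gen(\Heeg)$, probably by iteratively applying surgery or bypass exact triangles in instanton homology to a handle decomposition of $(M,\gamma)$ that is compatible with the $\alpha$- and $\beta$-curves of $\Heeg$. The main difficulty is producing such a model with \emph{at most} $|\gen(\Heeg)|$ generators: on the Heegaard Floer side this comes for free from holomorphic disk counts, whereas instanton homology has no analogously tautological generator-level description, and this is presumably why admissibility plays a genuine role (as emphasized by the remark following Theorem \ref{thm:main}).
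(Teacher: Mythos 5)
Your proof is correct and matches the paper's reasoning exactly: the corollary is stated there as an immediate consequence of Theorem \ref{thm:main}, obtained by applying that theorem to an admissible diagram realizing the minimum defining $\Met(M,\gamma)$ (and the paper likewise notes in \S\ref{ssec:admissible} that every balanced sutured manifold admits an admissible diagram). Nothing further is needed.
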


For the  corollaries below, we recall the natural sutured manifolds associated to closed 3-manifolds and knots therein. Given a closed 3-manifold $Y$, let $(Y(1),\delta)$ denote the sutured manifold obtained by removing a 3-ball from $Y$, where $\delta$ is a simple closed curve on $\partial Y(1)\cong S^2$. Similarly, given a knot $K\subset Y$, let $(Y(K),m \cup -m)$ be the sutured manifold obtained by removing a tubular neighborhood of $K$, where $m$ and $-m$ are oppositely oriented meridional curves on $\partial Y(K)\cong T^2$. The framed instanton  and Heegaard Floer homologies of a closed 3-manifold $Y$ are given by \begin{align*}
I^\#(Y) &\cong \SHI(Y(1),\delta) \\
\HF(Y) &\cong \SFH(Y(1),\delta).
\end{align*} 
Likewise the instanton and Heegaard knot Floer homologies of a knot $K\subset Y$ are given by
\begin{align*}\KHI(Y,K) &\cong \SHI(Y(K),m\cup-m)\\\HFK(Y,K) &\cong\SFH(Y(K),m\cup-m).\end{align*}

Recall that the ranks of the  Heegaard Floer homology and framed instanton homology  of a rational homology 3-sphere $Y$ are each bounded below by $|H_1(Y)|$. An \emph{L-space}, respectively \emph{instanton L-space}, is a rational homology 3-sphere which achieves these  lower bounds
 \begin{align*}\rk_\Z \HF(Y) &= |H_1(Y)|, \\
 \dim_\C I^\#(Y) &= |H_1(Y)|,\end{align*} respectively.
 A  \emph{strong L-space}, as defined by Levine and Lewallen in  \cite{levine-lewallen}, is a rational homology 3-sphere $Y$ which satisfies the stronger condition \[ \Met(Y(1),\delta) = |H_1(Y)|.\footnote{Strong L-spaces are also of interest  because Levine and Lewallen were able to show that their fundamental groups are not left-orderable, as predicted by the L-space Conjecture.}  \] Indeed, this condition implies  that $Y$ is an L-space, since  \[|H_1(Y)|
\leq \rk_\Z \HF(Y) \leq \Met(Y(1),\delta).\]
The following is then an immediate corollary of Theorem \ref{thm:main}.

\begin{corollary}
If $Y$ is a strong L-space, then it is an instanton L-space. 
\end{corollary}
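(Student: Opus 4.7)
The plan is to prove this as a direct sandwiching of inequalities, exactly as the corollary is advertised to be an immediate consequence of Theorem~\ref{thm:main}. The key observation is that for any rational homology 3-sphere $Y$, the framed instanton homology $I^\#(Y)\cong \SHI(Y(1),\delta)$ admits the universal lower bound $\dim_\C I^\#(Y)\geq |H_1(Y)|$ that defines an instanton L-space; so to prove $Y$ is an instanton L-space, it suffices to establish the reverse inequality $\dim_\C I^\#(Y)\leq |H_1(Y)|$.

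First, I would unpack the strong L-space hypothesis: by definition of Levine--Lewallen, $\Met(Y(1),\delta)=|H_1(Y)|$. Second, I would invoke the identification $I^\#(Y)\cong \SHI(Y(1),\delta)$ and apply the corollary to Theorem~\ref{thm:main}, which gives the bound $\dim_\C \SHI(Y(1),\delta)\leq \Met(Y(1),\delta)$. Chaining these together yields
\[
|H_1(Y)|\;\leq\; \dim_\C I^\#(Y)\;=\;\dim_\C \SHI(Y(1),\delta)\;\leq\; \Met(Y(1),\delta)\;=\;|H_1(Y)|,
\]
which forces equality throughout. In particular $\dim_\C I^\#(Y)=|H_1(Y)|$, so $Y$ is an instanton L-space.

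There really is no hard step here: all of the substance has been shouldered by Theorem~\ref{thm:main} and by the standard lower bound on $\dim_\C I^\#(Y)$ for rational homology 3-spheres. The only place one could be tripped up would be in the identification $I^\#(Y)\cong\SHI(Y(1),\delta)$, which is recalled explicitly in the excerpt, so the proof reduces to a two-line chain of (in)equalities.
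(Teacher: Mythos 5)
Your proof is correct and is precisely the argument the paper intends: chaining the standard lower bound $\dim_\C I^\#(Y)\geq |H_1(Y)|$ with the bound $\dim_\C\SHI(Y(1),\delta)\leq \Met(Y(1),\delta)$ from the corollary to Theorem~\ref{thm:main} and the strong L-space hypothesis. Nothing is missing.
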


More generally,  we say that a sutured Heegaard diagram $\Heeg$ for a  sutured manifold $(M,\gamma)$ is \emph{strong} if $\Heeg$ is admissible and the   sutured Floer complex $\SFC(\Heeg)$ has trivial differential. We then have the following.

\begin{corollary}
If $(M,\gamma)$ is a balanced sutured manifold which has a strong sutured Heegaard diagram, then \[\dim_\C\SHI(M,\gamma)\leq \rk_\Z\SFH(M,\gamma).\] 
\end{corollary}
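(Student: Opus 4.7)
The plan is to reduce the statement directly to Theorem \ref{thm:main} together with the definition of a strong sutured Heegaard diagram. Let $\Heeg$ be a strong sutured Heegaard diagram for $(M,\gamma)$, which exists by hypothesis. By definition, $\Heeg$ is admissible, so Theorem \ref{thm:main} applies and yields
\[
\dim_\C \SHI(M,\gamma) \leq |\gen(\Heeg)|.
\]
It then suffices to identify $|\gen(\Heeg)|$ with $\rk_\Z \SFH(M,\gamma)$.

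For this identification I would use the strongness assumption, namely that $\SFC(\Heeg)$ has trivial differential. Since the sutured Floer chain complex is by construction the free abelian group generated by $\gen(\Heeg)$, trivial differential means
\[
\SFH(M,\gamma) = H_*(\SFC(\Heeg), \partial) = \SFC(\Heeg) \cong \Z^{|\gen(\Heeg)|},
\]
so $\rk_\Z \SFH(M,\gamma) = |\gen(\Heeg)|$. Chaining this with the bound above gives exactly the desired inequality.

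There is no real obstacle; the corollary is a one-line consequence of Theorem \ref{thm:main} once one unpacks the definition of \emph{strong}. The only minor subtlety worth remarking on is that invariance of $\SFH$ under admissible Heegaard moves ensures that although $|\gen(\Heeg)|$ depends on the choice of diagram, the value $\rk_\Z \SFH(M,\gamma)$ computed from the particular strong $\Heeg$ is the topological invariant of $(M,\gamma)$, so the statement is indeed about $(M,\gamma)$ and not about a specific diagram.
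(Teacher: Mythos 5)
Your proposal is correct and is exactly the argument the paper intends: a strong diagram is admissible with trivial differential, so $|\gen(\Heeg)|=\rk_\Z\SFH(M,\gamma)$, and Theorem \ref{thm:main} gives the bound. The paper states this corollary without proof precisely because it follows in this one-line way.
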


For example, when $K$ is a $(1,1)$-knot in a lens space $L(p,q)$,  the  sutured manifold \[\SHI((L(p,q))(K),m\cup-m)\] has a strong  Heegaard diagram. We thus reproduce the following  result of Li and Ye \cite{li-ye}.

\begin{corollary}
\label{cor:simple}
If $K\subset L(p,q)$ is a $(1,1)$-knot, then \[\dim_\C\KHI(L(p,q),K)\leq \rk_\Z\HFK(L(p,q),K).\] \end{corollary}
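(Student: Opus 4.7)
The plan is to apply the preceding corollary --- which gives $\dim_\C\SHI(M,\gamma)\leq \rk_\Z\SFH(M,\gamma)$ whenever $(M,\gamma)$ admits a strong admissible sutured Heegaard diagram --- to $(M,\gamma):=(L(p,q)(K),m\cup -m)$. Via the natural identifications
\[
\KHI(L(p,q),K)\cong \SHI(M,\gamma), \qquad \HFK(L(p,q),K)\cong\SFH(M,\gamma),
\]
this yields the desired inequality, so the task reduces to exhibiting a strong admissible sutured Heegaard diagram for $(L(p,q)(K),m\cup -m)$.

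To construct such a diagram I would start from the $(1,1)$-bridge decomposition of $K$: there is a genus-$1$ Heegaard surface $T\subset L(p,q)$ meeting $K$ transversely in two points, with $K$ intersecting each handlebody in a trivial arc. This produces a doubly-pointed Heegaard diagram $(T,\alpha,\beta,w,z)$ in which $\alpha$ and $\beta$ are each single simple closed curves on $T$. Deleting small open disk neighborhoods of $w$ and $z$ from $T$, and retaining $\alpha,\beta$ in the resulting twice-punctured torus $\Sigma$, gives a sutured Heegaard diagram $\Heeg=(\Sigma,\alpha,\beta)$ for $(M,\gamma)$. A generic small isotopy of $\alpha$ or $\beta$ then arranges admissibility.

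The central step --- and, I expect, the principal obstacle --- is verifying that $\SFC(\Heeg)$ has vanishing differential. Since $\Sigma$ has genus $1$, the symmetric product $\Sym^1(\Sigma)$ is just $\Sigma$ itself, and the sutured differential counts homotopy classes of Whitney disks represented by embedded bigons on $\Sigma$ with corners at two distinct points of $\alpha\cap\beta$ and disjoint from both basepoints. The key claim to establish is that on the ambient torus $T$, every embedded bigon between two distinct intersections of $\alpha\cap\beta$ must contain at least one of $w,z$ in its interior, so no bigon contributes to the sutured differential. I would prove this by a direct combinatorial case analysis of how the pair of curves $\alpha,\beta$ and the pair of basepoints $w,z$ can be configured on $T$ in a $(1,1)$-decomposition of a knot in a lens space, essentially reproducing the analysis of Li and Ye \cite{li-ye}. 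Granting strongness, the preceding corollary immediately delivers $\dim_\C\KHI(L(p,q),K)\leq \rk_\Z\HFK(L(p,q),K)$.
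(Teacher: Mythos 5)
Your proposal matches the paper's argument: the paper likewise observes that a $(1,1)$-knot in $L(p,q)$ yields a strong (admissible, differential-free) sutured Heegaard diagram for $(L(p,q)(K),m\cup -m)$ and then invokes the preceding corollary together with the identifications $\KHI\cong\SHI$ and $\HFK\cong\SFH$. The only caveat is that your key claim---that every embedded bigon on $T$ contains $w$ or $z$---is false for an arbitrary $(1,1)$-diagram and holds only after isotoping $\alpha$ and $\beta$ into minimal position in the twice-punctured torus (which is exactly the content of the Li--Ye analysis you defer to, and which the paper also takes as given); note also that admissibility is automatic here since $H_1(M,\partial M;\Q)=0$ for a knot complement in a rational homology sphere.
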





\subsection{On the proof} Given a vertical tangle $T$ in a balanced sutured manifold $(M,\gamma)$, one forms an associated  sutured manifold $(M_T,\gamma_T)$ by removing a neighborhood of $T$ from $M$, and adding meridians  of the components of $T$ to $\gamma$; see \S\ref{sec:inequality} for more details. Li and Ye proved the following dimension inequality  in \cite[Proposition 3.14]{li-ye}.

\begin{theorem}
\label{thm:tangleinequality}
If $T$ is a vertical tangle in $(M,\gamma)$ such that $[T_i]=0$ in $H_1(M,\partial M;\Q)$ for each component $T_i$ of $T$, then \[\dim_\C \SHI(M,\gamma)\leq \dim_\C\SHI(M_T,\gamma_T).\]\end{theorem}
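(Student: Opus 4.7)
The plan is to exhibit $\SHI(M,\gamma)$ as an extremal graded summand of $\SHI(M_T,\gamma_T)$ with respect to a grading induced by rational Seifert surfaces for the components of $T$; the dimension inequality then follows immediately. First I would reduce to the case of a single vertical arc by induction on the number of components of $T$: peel off one arc $T_n$ to form the intermediate sutured manifold $(M\smallsetminus\nu(T_n),\gamma\cup m_{T_n})$, apply the single-arc case to it, and observe that the remaining arcs are still rationally null-homologous in this intermediate manifold, since drilling a single arc does not alter the relative homology class of a cycle supported away from it.

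For the base case, let $T_1\subset(M,\gamma)$ be a single vertical arc with $[T_1]=0$ in $H_1(M,\partial M;\Q)$. Clearing denominators produces a properly embedded oriented surface $S\subset M$ with $\partial S = k\cdot T_1\cup\beta$ for some integer $k\geq 1$ and some 1-cycle $\beta\subset\partial M\smallsetminus\gamma$. Drilling $\nu(T_1)$ converts $S$ into a properly embedded surface $\hat S\subset M_T$ whose boundary meets each of the new meridional sutures transversally. By the theory of surface-induced $\Z$-gradings on sutured instanton homology (Kronheimer--Mrowka, refined by Ghosh--Li in \cite{ghosh-li}), $\hat S$ induces a direct-sum decomposition
\[
\SHI(M_T,\gamma_T)\;=\;\bigoplus_{j\in\Z}\SHI(M_T,\gamma_T;\hat S,j).
\]

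The heart of the argument is to identify one extremal graded summand with $\SHI(M,\gamma)$. I would do this by performing a sutured manifold decomposition along a suitable perturbation of $\hat S$ and showing that the resulting sutured manifold is a disjoint union of $(M,\gamma)$ with product sutured manifolds---intuitively, cutting along $\hat S$ ``closes up'' the drilled tube and reassembles $(M,\gamma)$. Kronheimer--Mrowka's decomposition theorem for $\SHI$ then identifies the extremal summand with $\SHI(M,\gamma)$, giving
\[
\dim_\C\SHI(M,\gamma)\;=\;\dim_\C\SHI(M_T,\gamma_T;\hat S,j_{\max})\;\leq\;\dim_\C\SHI(M_T,\gamma_T).
\]
The main obstacle will be this geometric identification: one must carefully track how the new meridional sutures around $T_1$ interact with the cut along $\hat S$, verify that the decomposition's non-product component is indeed $(M,\gamma)$ rather than some more complicated sutured manifold, and confirm that no auxiliary pieces contribute extra dimensions to the extremal summand---a verification that becomes especially delicate when $k>1$, since then $\hat S$ wraps the meridional sutures multiple times and the grading must be normalized accordingly.
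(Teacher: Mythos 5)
Your reduction to a single arc is fine (and the observation that drilling one arc preserves the rational nullhomologousness of the others is correct), but the base case contains a fatal error: the claimed identification of $\SHI(M,\gamma)$ with an extremal graded summand of $\SHI(M_T,\gamma_T)$ is false. The grading induced by $\hat S$ depends only on its relative homology class, so it coincides with the Alexander-type grading induced by a taut rational Seifert surface in that class, and by the Kronheimer--Mrowka/Ghosh--Li decomposition theorems the extremal summand is $\SHI$ of the complement of that taut surface --- not $\SHI(M,\gamma)$. Concretely, take $(M,\gamma)=(S^3(1),\delta)$ and $T_1$ a knotted vertical arc closing up to a genus-one non-fibered knot $K$ (e.g.\ $5_2$); then $(M_T,\gamma_T)\cong(S^3(K),m\cup -m)$, the top Alexander summand of $\KHI(S^3,K)$ has dimension at least $2$ by fiberedness detection, while $\dim_\C\SHI(M,\gamma)=\dim_\C I^\#(S^3)=1$. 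The underlying geometric misconception is that decomposing along $\hat S$ ``closes up the drilled tube'': sutured decomposition cuts $M_T$ open along $\hat S$ and can never refill $\nu(T_1)$. Refilling is a Dehn filling on the meridian, which is a surgery, not a decomposition --- and this is precisely why the established proofs proceed differently.

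The actual argument (Li--Ye's Proposition 3.14, reproduced and generalized in the paper's proof of Theorem \ref{thm:inequality2}) constructs no direct summand inclusion at all. Instead it introduces a family of ``longitudinal'' sutures $\Gamma_m$ on $M_T$ and uses two exact triangles: a surgery exact triangle on the meridian of $T_1$ relating $\SHI(-M_T,-\Gamma_m)$, $\SHI(-M_T,-\Gamma_{m+1})$, and $\SHI(-M,-\gamma)$, in which the map $G_m$ out of $\SHI(-M,-\gamma)$ vanishes for $m$ large, so that
\[
\dim_\C\SHI(-M,-\gamma)=\dim_\C\SHI(-M_T,-\Gamma_{m+1})-\dim_\C\SHI(-M_T,-\Gamma_m);
\]
and a bypass exact triangle whose third vertex is $\SHI(-M_T,-\gamma_T)$, which bounds that difference from above. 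Your proposal supplies no map between $\SHI(M,\gamma)$ and $\SHI(M_T,\gamma_T)$ and no mechanism to replace these triangles, so even the weaker inequality does not follow from what you have written. To repair the proof you would need to abandon the extremal-summand strategy and set up the surgery/bypass triangle machinery (or an equivalent spectral sequence from $\SHI(M_T,\gamma_T)$ converging to $\SHI(M,\gamma)$).
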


To prove Theorem \ref{thm:main}, we  first establish the same inequality under the weaker assumption that  $T$ (rather than each of its components) is rationally nullhomologous, in \S\ref{sec:inequality}.

\begin{theorem}
\label{thm:inequality2} 
If $T$ is a vertical tangle in $(M,\gamma)$ such that $[T]=0$ in $H_1(M,\partial M;\Q)$, then \[\dim_\C\SHI(M,\gamma)\leq \dim_\C\SHI(M_T,\gamma_T).\]
\end{theorem}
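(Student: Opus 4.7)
The plan is to prove Theorem \ref{thm:inequality2} by induction on the number $n$ of components of $T$, reducing at each step to Theorem \ref{thm:tangleinequality}. When $n=1$, the hypothesis $[T]=0$ coincides with $[T_1]=0$, and Theorem \ref{thm:tangleinequality} applies directly.

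For the inductive step with $n\ge 2$, I first handle the case where some individual component $T_i$ satisfies $[T_i]=0$ in $H_1(M,\partial M;\Q)$. Apply Theorem \ref{thm:tangleinequality} to peel off $T_i$, obtaining
\[
\dim_\C\SHI(M,\gamma)\le\dim_\C\SHI(M_{T_i},\gamma_{T_i}).
\]
The complementary tangle $T\setminus T_i$ is a vertical tangle in $(M_{T_i},\gamma_{T_i})$ with $n-1$ components, and a short Mayer--Vietoris argument, using $[T_i]=0$ and $[T]=0$ in $H_1(M,\partial M;\Q)$, shows that $[T\setminus T_i]=0$ in $H_1(M_{T_i},\partial M_{T_i};\Q)$ as well. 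The inductive hypothesis then yields $\dim_\C\SHI(M_{T_i},\gamma_{T_i})\le\dim_\C\SHI(M_T,\gamma_T)$, closing this case.

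In the remaining case where no $[T_i]$ vanishes individually (but $\sum[T_i]=0$ in the $\Q$-vector space $H_1(M,\partial M;\Q)$, so at least two nonzero contributions exist), I propose a tunneling step to merge two components. Choose an arc $\beta\subset M\setminus T$ from a point on $T_i$ to a point on $T_j$ with an appropriate framing, and replace $T_i\cup T_j$ by a single vertical arc obtained by tubing along $\beta$. The resulting tangle $T'$ has $n-1$ components and still satisfies $[T']=[T]=0$. Moreover, $(M_{T'},\gamma_{T'})$ is obtained from $(M_T,\gamma_T)$ by attaching a contact $1$-handle along $\beta$, the two attaching disks each meeting the meridional sutures around $T_i$ and $T_j$ in a single arc, so that these two meridians merge into a single meridian around the tunneled component. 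Since contact $1$-handle attachments preserve $\SHI$, we get
\[
\dim_\C\SHI(M_T,\gamma_T)=\dim_\C\SHI(M_{T'},\gamma_{T'}),
\]
and applying the inductive hypothesis to $T'$ completes the argument.

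The main obstacle is making the tunneling step rigorous. A naive band-sum of two disjoint vertical arcs yields two vertical arcs, not one, so the merging operation must be set up as a genuine tunneling that lowers the component count while keeping $T'$ vertical, and the identification of the resulting suture change with a contact $1$-handle attachment must be verified at the level of both the underlying $3$-manifold and the suture data. A cleaner alternative would be to exploit directly the rationally-cobounding surface $\Sigma\subset M$ whose boundary consists of $T$ together with arcs on $\partial M$ (which exists by the hypothesis $[T]=0$) to build a single cobordism from $(M,\gamma)$ to $(M_T,\gamma_T)$ inducing an injection on $\SHI$, bypassing the inductive reduction altogether.
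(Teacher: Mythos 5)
Your base case and your first inductive case are fine, but they cover only the situations already handled by Theorem \ref{thm:tangleinequality}: peeling off a component $T_i$ with $[T_i]=0$ works (and your Mayer--Vietoris claim is correct, since $H_1(M_{T_i},\partial M_{T_i};\Q)\cong H_1(M,\partial M;\Q)$ because $N(T_i)$ deformation retracts into $\partial M$). The entire content of Theorem \ref{thm:inequality2} beyond the cited result is your remaining case, where no component is individually rationally nullhomologous, and there the proposal has a genuine gap that you yourself flag but do not close. The tunneling step does not produce a vertical tangle: $T_i\cup T_j$ has four endpoints on $R(\gamma)$ while a single vertical arc has two, and a regular neighborhood of $T_i\cup\beta\cup T_j$ is a ball meeting $\partial M$ in four disks rather than a tubular neighborhood of an arc. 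So the object $T'$ is not in the class to which the inductive hypothesis applies, the ``single meridian around the tunneled component'' is not the suture data of any $(M_S,\gamma_S)$, and the claimed identification of $(M_{T'},\gamma_{T'})$ with a contact $1$-handle attachment to $(M_T,\gamma_T)$ (which in any case goes in the wrong direction, since $M_{T'}=M_T-N(\beta)$) is not established. In general one should not expect an isomorphism of $\SHI$ here: relating the complement of a tangle to the complement of a ``merged'' object is exactly the kind of statement that requires an exact triangle rather than an equality of dimensions. The fallback suggestion of building a single cobordism inducing an injection on $\SHI$ from a rationally cobounding surface is a hope, not an argument; injectivity of cobordism maps is precisely the hard analytic/algebraic input.

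For comparison, the paper's proof does concatenate components, but globally rather than pairwise, and it pays the price you are trying to avoid. All of $T_1,\dots,T_n$ are joined by boundary arcs $\xi_i$ (each meeting $\gamma$ once) into a single vertical arc $T_1'$ with $[T_1']=[T]=0$; auxiliary boundary-parallel components $T_2',\dots,T_n'$ cobounding product disks $D_i$ are added so that the complement of the mixed tangle $T'=T_1'\cup\dots\cup T_n'$ can be compared to both $(M,\gamma)$ and $(M_T,\gamma_T)$. The single rationally nullhomologous component $T_1'$ is then handled by the surgery exact triangle with longitudinal sutures $\Gamma_m$ and the vanishing of $G_m$ for large $m$ (from \cite{li-ye}), while the $\Z^{n-1}$-gradings induced by the disks $D_i$ and annuli $A_i$, a bypass exact triangle, and a surface-decomposition argument recover $\SHI(-M,-\gamma)$ and $\SHI(-M_T,-\gamma_T)$ from the appropriate graded summands. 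That apparatus is what replaces your missing merging step; without something of comparable strength, the induction does not close.
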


Next, given an admissible sutured Heegaard diagram $\Heeg$ for $(M,\gamma)$, we construct  a  vertical tangle $T\subset (M,\gamma)$ with $[T]=0$ in $H_1(M,\partial M;\Q)$, such that \[\dim_\C\SHI(M_T,\gamma_T) = |\gen(\Heeg)|.\] This is the content of \S\ref{sec:proof}. Theorem \ref{thm:main} then follows from Theorem \ref{thm:inequality2}. 

\begin{remark}Theorem \ref{thm:tangleinequality} suffices to prove Theorem \ref{thm:main} in the case where  $H_1(M,\partial M;\Q)=0$, but we need the stronger Theorem \ref{thm:inequality2} in general.
\end{remark}

\subsection{Organization} In \S\ref{sec:inequality}, we prove the  inequality in Theorem \ref{thm:inequality2}. We then use this in \S\ref{sec:proof} to prove our main result, Theorem \ref{thm:main}. Finally, in \S\ref{sec:further}, we discuss further directions, some of which are in progress. In particular, we discuss the possibility of using the proof of Theorem \ref{thm:main} to construct a grading on $\SHI$ by homotopy classes of 2-plane fields, and the prospects for upgrading Theorem \ref{thm:main} to a proof of the isomorphism \eqref{eqn:iso1}.

\subsection{Acknowledgements} We thank Adam Levine, Tye Lidman,  and Steven Sivek for  helpful conversations. 

\section{A dimension inequality}\label{sec:inequality}

A \emph{vertical tangle} $T=T_1\cup \dots \cup T_n$ in a balanced sutured manifold $(M,\gamma)$ is a properly embedded 1-manifold in $M$, with boundary in $R(\gamma)$,   whose components $T_i$ are oriented from $R_+(\gamma)$ to $R_-(\gamma)$.
One forms an associated  balanced sutured manifold $(M_T,\gamma_T)$ by removing tubular neighborhoods of the components $T_i$, and adding  positively-oriented meridians of these components to the suture $\gamma$, as in \cite[Section 3]{li-ye} and depicted in Figure \ref{fig:tangle}.  In this section, we prove Theorem \ref{thm:inequality2}, which states that \[\dim_\C \SHI(M,\gamma)\leq \dim_\C\SHI(M_T,\gamma_T)\] when $[T]=0$ in $H_1(M,\partial M;\Q)$. The  rough idea  is to turn $T$ into a related tangle $T'$ whose components are rationally nullhomologous, and apply Theorem \ref{thm:tangleinequality}.

\begin{proof}[Proof of Theorem \ref{thm:inequality2}] Let $T_1,\dots,T_n$ be the components of $T$. For $i=1,\dots,n$, let \[\partial T_i = q_i-p_i,\] where $p_i\in R_+(\gamma)$ and $q_i \in R_-(\gamma)$. We may assume that $\gamma$ is connected, since we can achieve this by adding contact 1-handles to $(M,\gamma)$, an operation which does not change $\SHI(M,\gamma)$ (equivalently, $\SHI$ is invariant under product disk decomposition \cite{km-excision,bs-instanton}). Then we can find a sequence of pairwise disjoint  arcs \[\xi_1,\dots,\xi_n\subset\partial M\] such that, for each $i=1,\dots,n$, we have:
\begin{itemize}
\item $\partial\xi_i = p_{i+1}-q_i$ (where $p_{n+1}:=p_1$), and
\item $\xi_i$ intersects $\gamma$ in exactly one point.
\end{itemize} For every $i\in\{2,\dots,n\}$,  choose an arc $t_{i} \subset \partial M$ in a neighborhood of the unique intersection point $\xi_{i-1}\cap \gamma$, as depicted in Figures \ref{fig:ti} and \ref{fig:arcs}. Push the interior of $t_{i}$ into the interior of $M$ to turn this arc into a  vertical tangle $T_{i}'$, and  let  \[T'' = T_2' \cup\dots\cup T_n'.\] Then \[(M_{T''},\gamma_{T''}) = (M-N(T''),\gamma\cup \mu_2'\cup\dots\cup\mu_n'), \] where $\mu_i'$ is a positively-oriented meridian of $T_i'$.  Each component $T_{i}'$ cobounds a disk in $M$ with the arc $t_{i}$. These disks then restrict  to properly embedded disks \[D_2,\dots, D_n\subset M_{T''}\] with $|D_i\cap \gamma|=1$ and $|D_i\cap \mu_j'|=\delta_{ij}$, so that \[|D_i\cap \gamma_{T''}|=2.\] Thus, each $D_i$ is a product disk. 

\begin{figure}[ht]
\labellist
\tiny \hair 2pt
\pinlabel $t_i$ at 73 3
\pinlabel $\xi_{i-1}$ at 136 48
\pinlabel $R_-(\gamma)$ at 12 65
\pinlabel $R_+(\gamma)$ at 50 65
\pinlabel $\partial M$ at 150 70
\pinlabel $\gamma$ at 129 28
\endlabellist
\centering
\includegraphics[width=3.5cm]{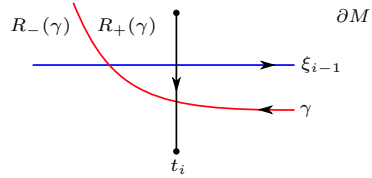}
\caption{The point of view is from the interior of $M$, looking at $\partial M$.}
\label{fig:ti}
\end{figure}

\begin{figure}[ht]
\labellist
\pinlabel $\dots$ at 200 59
\tiny \hair 2pt
\pinlabel $M$ at 272 59
\pinlabel $T_1$ at 48 70
\pinlabel $T_2$ at 98 70
\pinlabel $T_3$ at 150.5 70
\pinlabel $T_n$ at 238.5 70
\pinlabel $\xi_1$ at 68 30
\pinlabel $\xi_3$ at 167 30
\pinlabel $\xi_2$ at 120 94
\pinlabel $\xi_n$ at 215 108
\pinlabel $p_1$ at 48 98
\pinlabel $q_1$ at 48 24
\pinlabel $p_n$ at 251 24
\pinlabel $q_n$ at 251 98
\pinlabel $t_2$ at 87 35.5
\pinlabel $t_3$ at 149 89
\pinlabel $t_{n}$ at 229 35.5
\endlabellist
\centering
\includegraphics[width=10cm]{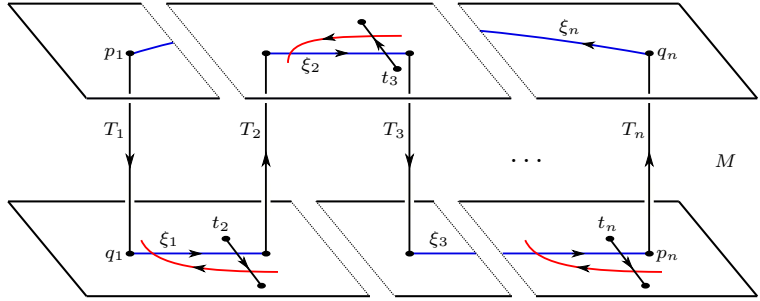}
\caption{The tangle $T = T_1\cup \dots \cup T_n$ in $M$ and the arcs $\xi_i$ and $t_i$ in $\partial M$. The suture $\gamma$ is shown in red.}
\label{fig:arcs}
\end{figure}

Next, consider the arc \[t_1 = T_1\cup\xi_1\cup T_2\cup\xi_2\cup\dots\cup T_{n-1}\cup \xi_{n-1}\cup T_n \subset M_{T''}.\] Push its interior into the interior of $M_{T''}$ to form a vertical tangle $T_1'$ with $\partial T_1' = q_n-p_1,$ as in Figure \ref{fig:T}.  Let $T'$ be the tangle in $M$ given by \[T' = T_1'\cup T'' = T_1'\cup \dots\cup T_n'.\] We will refer to a tangle $T'$ formed in this way as a \emph{mixed tangle for $T$}. Note that \[(M_{T'},\gamma_{T'}) = ((M_{T''})_{T_1'}, (\gamma_{T''})_{T_1'}) = (M-N(T'),\gamma\cup \mu_1'\cup\dots\cup\mu_n'), \] where $\mu_1'$ is a positively-oriented meridian of $T_1'$. Observe that the  disks $D_2,\dots, D_n\subset M_{T''}$ restrict to properly embedded annuli \[A_2,\dots,A_n\subset M_{T'}\] since $T_1'$ intersects each disk  in exactly one point, as shown in Figure \ref{fig:annulus}. 

\begin{figure}[ht]
\labellist
\pinlabel $\dots$ at 200 59
\tiny \hair 2pt
\pinlabel $M$ at 272 59
\pinlabel $T_1'$ at 48 70

\pinlabel $\xi_n$ at 215 108
\pinlabel $p_1$ at 48 98
\pinlabel $q_n$ at 251 98
\pinlabel $T_2'$ at 87 38
\pinlabel $T_3'$ at 136 90
\pinlabel $T_n'$ at 228 38
\endlabellist
\centering
\includegraphics[width=10cm]{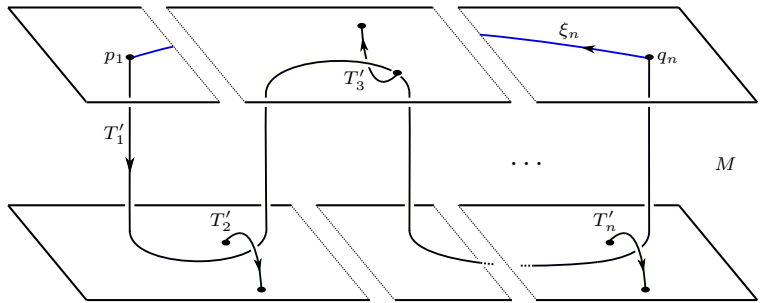}
\caption{The tangle $T' = T_1' \cup \dots \cup T_n'$ in $M$.}
\label{fig:T}
\end{figure}

\begin{figure}[ht]
\labellist
\tiny \hair 2pt
\pinlabel $M_{T'}$ at 210 105
\pinlabel $\partial N(T_1')$ at 9 53
\pinlabel $\partial N(T_i')$ at 147 30

\pinlabel $\mu_i'$ at 126 116
\pinlabel $\mu_1'$ at 52 51
\pinlabel $\gamma$ at 187 83
\pinlabel $A_i$ at 94 92
\pinlabel $R_+$ at 43 82
\pinlabel $R_-$ at 106 8
\pinlabel $R_+$ at 26 37.5
\pinlabel $R_-$ at 71 57
\endlabellist
\centering
\includegraphics[width=6cm]{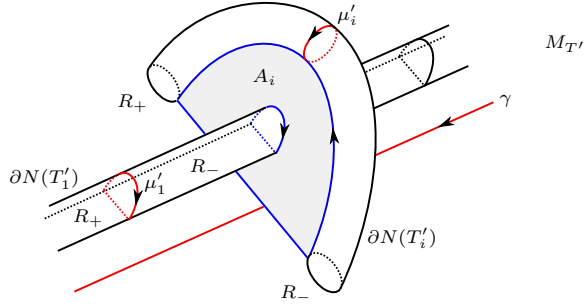}
\caption{The annulus $A_i$ in $M_{T'}$ near the boundaries of the tubular neighborhoods of the components $T_1'$ and $T_i'$, as seen from inside $M_{T'}$.}
\label{fig:annulus}
\end{figure}

The endpoints of the arc $\xi_n\subset \partial M_{T''}$ agree with $\partial T_1'$, and $|\xi_n\cap\gamma|=1$.  We can thus use $\xi_n$ together with  $\gamma$ and $\mu_1'$ to define a sequence of  sutures $\Gamma_m\subset \partial M_{T'}$ for $m\in\mathbb{N}$, as in \cite[Section 3.2]{li-ye}, which one should regard as ``longitudinal" sutures for $T_1'$; see Figure \ref{fig:annulus2}. By the construction of $T_1'$ and the assumption  that $T$ is rationally nullhomologous in $(M,\partial M)$, we have  \[[T_1'] = [T] =0 \in H_1(M_{T''},\partial M_{T''};\Q).\] Therefore, by \cite[Lemmas 3.21 and 3.22]{li-ye}, we have  the following.

\begin{lemma}
\label{lem:tri1} There is an exact triangle  \[ \xymatrix@C=-40pt@R=30pt{
\SHI(-M_{T'},-\Gamma_m) \ar[rr] & & \SHI(-M_{T'},-\Gamma_{m+1}) \ar[dl]^{F_{m+1}} \\
& \SHI(-M_{T''},-\gamma_{T''}), \ar[ul]^{G_m} & \\
} \] coming from the surgery exact triangle associated to surgeries on the meridian $\mu_1'$ of $T_1'$. Furthermore, $G_m\equiv 0$ for $m$ sufficiently large.
\end{lemma}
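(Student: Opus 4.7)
The plan is to realize the triangle as an instance of the sutured-instanton surgery exact triangle applied to three sutures on the torus boundary component $\partial N(T_1') \subset \partial M_{T'}$, and then to extract the vanishing of $G_m$ from a $\Q$-grading induced by a rational Seifert surface for $T_1'$. In both parts I would follow the template of \cite[Lemmas 3.21 and 3.22]{li-ye}, adapting the arguments to the weaker hypothesis here that only the total tangle $T$ (and hence $T_1'$, after the tubing construction) is rationally nullhomologous, rather than each of its components.

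First I would set up the three sutures on $\partial N(T_1')$: the meridian $\mu_1'$ of $T_1'$, together with two slopes that twist around the longitudinal direction determined by the arc $\xi_n$, chosen so that any two of the three meet in exactly one point. Filling along $\mu_1'$ restores a tubular neighborhood of $T_1'$; the resulting sutured manifold agrees with $(M_{T''}, \gamma_{T''})$ after the product-disk decompositions along $D_2, \dots, D_n$ described just before the statement, and since product-disk decompositions preserve $\SHI$ \cite{km-excision, bs-instanton}, this vertex of the triangle is $\SHI(-M_{T''}, -\gamma_{T''})$. The other two slopes, combined with the remaining sutures $\gamma \cup \mu_2' \cup \dots \cup \mu_n'$ on $\partial M_{T'}$, yield exactly $\Gamma_m$ and $\Gamma_{m+1}$. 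The triangle itself is then the sutured-instanton version of Floer's classical surgery exact triangle, in the form already used throughout \cite{li-ye}.

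For the vanishing of $G_m$, I would use that $[T_1'] = [T] = 0$ in $H_1(M_{T''}, \partial M_{T''}; \Q)$ to produce a rational Seifert surface $\Sigma$ for $T_1'$ in $M_{T''}$, and use $\Sigma$ to induce a $\Q$-grading on $\SHI(-M_{T'}, -\Gamma_m)$ as in \cite{li-ye}. With respect to this grading the maps $F_{m+1}$ and $G_m$ are homogeneous up to an explicit shift that depends on $m$. The support of $\SHI(-M_{T''}, -\gamma_{T''})$ is bounded independently of $m$, while the image of $G_m$ lies in a range of gradings that translates linearly with $m$; for $m$ sufficiently large this image lands entirely outside the support of $\SHI(-M_{T'}, -\Gamma_m)$, forcing $G_m \equiv 0$.

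The hard part will be verifying that the grading argument of \cite[Lemma 3.22]{li-ye}, which is stated for vertical tangles whose individual components are rationally nullhomologous, applies in our weaker setting. The key observation is that only $T_1'$ enters the definition of the grading used in the triangle, since the other components $T_2', \dots, T_n'$ have already been tubed off in passing from $M$ to $M_{T''}$; thus a rational Seifert surface for $T_1'$ inside $M_{T''}$ is exactly what is needed. Once this adaptation is carried out, the rest of the argument should go through essentially verbatim from the cited lemmas.
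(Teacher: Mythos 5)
Your proposal matches the paper's approach: the paper does not reprove the triangle but simply cites \cite[Lemmas 3.21 and 3.22]{li-ye}, after observing---exactly as you do---that $[T_1'] = [T] = 0$ in $H_1(M_{T''},\partial M_{T''};\Q)$, so the single-component tangle $T_1'$ satisfies the hypotheses of those lemmas even though the individual components of $T$ need not. Two small imprecisions worth fixing: $\partial N(T_1')\cap \partial M_{T'}$ is an annulus rather than a torus (since $T_1'$ is a vertical arc, the triangle is realized via surgeries on the closed curve $\mu_1'$ inside a closure, not via slopes on a boundary torus), and the meridional filling along $\mu_1'$ yields $(M_{T''},\gamma_{T''})$ directly, with no product-disk decompositions needed.
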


\begin{figure}[ht]
\labellist
\tiny \hair 2pt
\pinlabel $-M_{T'}$ at 383 105
\pinlabel $-\Gamma_{m+1}$ at 27 60
\pinlabel $-\gamma_{T'}$ at 210 60
\pinlabel $A_i$ at 94 92
\pinlabel $A_i^-$ at 272 92
\endlabellist
\centering
\includegraphics[width=12.5cm]{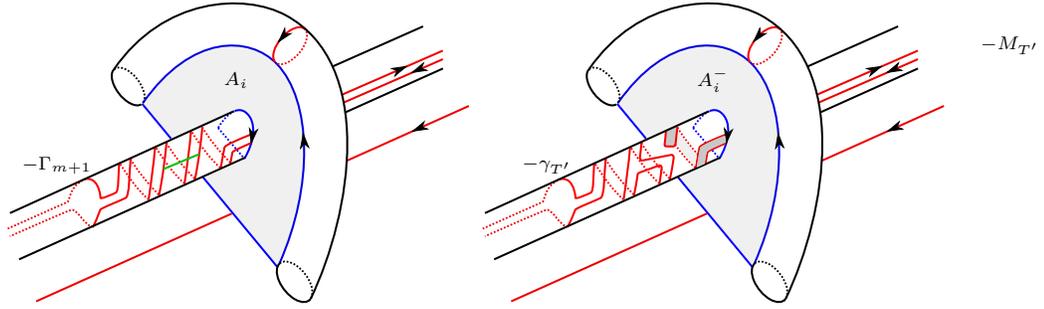}
\caption{Left, the suture $-\Gamma_{m+1}$. The bypass arc $\eta_-$ is shown in green. Right, the suture $-\gamma_{T'}$ resulting from the bypass attachment along $\eta_-$, and the negatively stabilized annulus $A_i^-\subset (-M_{T'},-\gamma_{T'})$.}
\label{fig:annulus2}
\end{figure}

For each $i=2,\dots,n$ and every $m\in \mathbb{N}$, we have that \[|A_i\cap \Gamma_m|=4.\] Let us orient each $A_i$ so that the induced orientation on $\partial A_i$ is opposite  the orientation of $\partial D_i$ coming from that of $T_i'$, as in Figures \ref{fig:annulus} and \ref{fig:annulus2}. By \cite{ghosh-li}, the disks $D_2,\dots,D_n$ induce a $\Z^{n-1}$-grading on \[\SHI(-M_{T''},-\gamma_{T''}).\] Similarly, the annuli $A_2,\dots,A_n$ induce a $\Z^{n-1}$-grading on \[\SHI(-M_{T'},-\Gamma_m)\] for each $m\in \mathbb{N}$, and we have the following graded version of the triangle in Lemma \ref{lem:tri2}.

\begin{lemma}
\label{lem:tri2} The exact triangle of Lemma \ref{lem:tri1} restricts to the exact triangle  \[ \xymatrix@C=-96pt@R=35pt{
\SHI(-M_{T'},-\Gamma_m, (A_2,\dots,A_n), (0,\dots,0)) \ar[rr] & & \SHI(-M_{T'},-\Gamma_{m+1},(A_2,\dots,A_n), (0,\dots,0)) \ar[dl]^{F_{m+1}} \\
& \SHI(-M_{T''},-\gamma_{T''},(D_2,\dots,D_n), (0,\dots,0)). \ar[ul]^{G_m} & \\
} \] 
\end{lemma}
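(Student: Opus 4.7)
The plan is to show that each of the three maps in the exact triangle of Lemma \ref{lem:tri1} respects the $\Z^{n-1}$-grading determined by $(A_2,\dots,A_n)$ on the $\SHI(-M_{T'},-\Gamma_\bullet)$ terms and by $(D_2,\dots,D_n)$ on $\SHI(-M_{T''},-\gamma_{T''})$. Once grading-preservation is established, the $(0,\dots,0)$-graded summands form their own exact triangle automatically, giving the statement.

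My first step is to exploit the locality of the surgery triangle. The three sutured manifolds $(M_{T'},\Gamma_m)$, $(M_{T'},\Gamma_{m+1})$, and $(M_{T''},\gamma_{T''})$ share an identical region away from a small neighborhood of the meridian $\mu_1'$ of $T_1'$, and the bypass cobordisms relating them (as in \cite[Section 3.2]{li-ye}) are supported entirely in that neighborhood. For each $i\geq 2$, the annulus $A_i$ meets $\partial N(T_1')$ only in a pair of arcs running between neighboring suture components; up to isotopy these arcs can be made disjoint from the bypass arcs involved. Moreover, the cobordism $(-M_{T'},-\Gamma_{m+1})\to (-M_{T''},-\gamma_{T''})$, which fills $T_1'$ back in, caps off $A_i$ by a meridian disk of the reconstructed solid torus $N(T_1')\subset M_{T''}$, producing exactly the product disk $D_i$.

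My second step is to translate this topological picture into grading-preservation via the framework of \cite{ghosh-li}. For each $i\in\{2,\dots,n\}$, I would choose compatible decorated closures of the three sutured manifolds so that the bypass cobordism extends to a closure cobordism, and I would extend $A_i$ and $D_i$ through the cobordism to a single properly embedded surface whose two ends are the closure-extensions of $A_i$ and $D_i$. Because the extending surface has matching ``pairing'' data at both ends, the closure cobordism map commutes with the $\mu$-action of the extension, so it decomposes as a direct sum over eigenspaces and sends the $(0,\dots,0)$-summand to the $(0,\dots,0)$-summand. Carrying this out simultaneously for all $i=2,\dots,n$ then yields the graded triangle.

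The hard part will be making the locality argument fully rigorous in the closure framework: one must arrange the auxiliary surface, the closure gluing, and the bypass cobordisms so that the extensions of the $A_i$ and their identification with $D_i$ after the cap-off of $T_1'$ can be performed compatibly and simultaneously for every $i\geq 2$. This is essentially the same issue handled in \cite[Lemmas 3.21 and 3.22]{li-ye} for the ungraded version, and I would expect the present graded refinement to follow by equipping their construction with consistent choices of extending surfaces; tracking any grading shifts induced by the bypass cobordisms (to confirm that they vanish at the $(0,\dots,0)$ level) will be the main bookkeeping task.
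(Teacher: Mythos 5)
Your overall strategy---pass to closures, extend the surfaces to closed surfaces $\bar A_i$, and show the cobordism maps of the surgery triangle intertwine the corresponding $\mu$-operators---is the same as the paper's. But there is a genuine gap at the decisive step, namely the map $F_{m+1}\colon \SHI(-M_{T'},-\Gamma_{m+1})\to\SHI(-M_{T''},-\gamma_{T''})$. You assert that because the cobordism caps $A_i$ off to $D_i$, the closure cobordism map ``commutes with the $\mu$-action of the extension'' and hence sends the $(0,\dots,0)$-summand to the $(0,\dots,0)$-summand. What the homology argument actually gives is that $F_{m+1}$ respects the grading defined by $\bar A_i$ on \emph{both} sides: $\bar A_i$ is disjoint from $\mu_1'$, so it survives into the closure $Y$ of $(-M_{T''},-\gamma_{T''})$ and is homologous to itself in the trace of the surgery. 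That is not yet the statement of the lemma, because the grading on $\SHI(-M_{T''},-\gamma_{T''})$ in the lemma is the one defined by $D_i$, whose closure extension $\bar D_i$ is a \emph{torus}, while $\bar A_i$ has genus $2$ (each component of $\partial A_i$ meets $\Gamma_{m+1}$ twice). These are different surfaces and, crucially, different homology classes: after the $0$-surgery $\bar A_i$ compresses and satisfies $[\bar A_i]=[\bar D_i]+[T_i^2]$ for an extra torus $T_i^2$, so $\mu(\bar A_i)\neq\mu(\bar D_i)$ as operators. There is no $3$-chain in the cobordism with boundary $\bar A_i - \bar D_i$ alone (your ``single properly embedded surface'' with those two ends cannot exist as stated), so ``matching pairing data'' does not identify the two gradings. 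The missing ingredient is the eigenvalue bound for tori (\cite[Corollary 2.9]{bs-trefoil}): $\mu(T_i^2)$ has only the generalized eigenvalue $0$, hence the generalized eigenspaces of $\mu(\bar A_i)=\mu(\bar D_i)+\mu(T_i^2)$ coincide with those of $\mu(\bar D_i)$. This is the actual content of the lemma, not bookkeeping, and your proposal does not supply it.

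Two smaller inaccuracies: the maps in the triangle of Lemma \ref{lem:tri1} are induced by the traces of surgeries on $\mu_1'$ (the paper verifies grading-preservation directly from these $2$-handle cobordisms), not by bypass cobordisms; and $A_i$ meets $\partial N(T_1')$ in an entire boundary circle (a meridian of $T_1'$), not in a pair of arcs. Neither of these is fatal, but the first matters because the clean statement ``$\bar A_i\subset Y_{m+1}$ is homologous to $\bar A_i\subset Y$ in the trace of the surgery'' is what makes step (a) of the argument immediate.
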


\begin{remark}
A more general version of Lemma \ref{lem:tri2} will be proved in \cite{li-ye2}.
\end{remark}

\begin{proof}[Proof of Lemma \ref{lem:tri2}] We will  prove that  the map $F_{m+1}$ preserves the gradings. The arguments for the other two maps are similar.

Let us first recall the definition of $F_{m+1}$ from \cite[Section 3]{li-ye}. Pick a \emph{closure} \[(Y_{m+1},R_{m+1},\omega_{m+1})\] for $(-M_{T'},-\Gamma_{m+1})$ so that each annulus $A_i$ extends to a closed surface $\bar A_i\subset Y_{m+1}$, as in \cite[Section 3]{li}. By the construction therein, \[g(\bar A_i)=2\] for  $i=2,\dots,n$, since each component of $\partial A_i$ intersects $\Gamma_{m+1}$ in two points. The sutured instanton homology of $(-M_{T'},-\Gamma_{m+1})$ is  defined as a certain direct summand \[\SHI(-M_{T'},-\Gamma_{m+1}) = I_*(Y_{m+1}|R_{m+1})_{\omega_{m+1}}\] of the instanton Floer homology $I_*(Y_{m+1})_{\omega_{m+1}}$ as in \cite[Section 7]{km-excision}. The summand \[\SHI(-M_{T'},-\Gamma_{m+1},(A_2,\dots,A_n), (i_2,\dots,i_n))\] is defined as the simultaneous (generalized) $(2i_2,\dots,2i_n)$-eigenspace of commuting operators \[\mu(\bar A_2),\dots,\mu(\bar A_n):\SHI(-M_{T'},-\Gamma_{m+1}) \to \SHI(-M_{T'},-\Gamma_{m+1})\] associated to these surfaces.

The meridian $\mu_1'$ of $T_1'$ can be thought of as an embedded circle in $Y_{m+1}$. Let $Y$ be the manifold obtained from $Y_{m+1}$ via $0$-surgery on $\mu_1'$, with respect to the framing of $\mu_1'$ induced by $\partial M_{T'}$.  Since $\mu_1'$ is disjoint from $R_{m+1}$ and  the $\bar A_i$, these surfaces survive in $Y$.
By \cite[Section 3.3]{bs-instanton}, $(Y,R_{m+1},\omega_{m+1})$ is a closure of $(-M_{T''}, -\gamma_{T''})$. The map $F_{m+1}$ is then induced by the cobordism given by the trace of $0$-surgery on $\mu_1'$. Since $\bar A_i\subset Y_{m+1}$ is homologous to $\bar A_i\subset Y$ in this  cobordism,  $F_{m+1}$ respects the eigenspaces of  $\mu(\bar A_i)$. Thus, $F_{m+1}$ maps  \[\SHI(-M_{T'},-\Gamma_{m+1},(A_2,\dots,A_n), (i_2,\dots,i_n))\] into \[\SHI(-M_{T''},-\gamma_{T''},(A_2,\dots,A_n), (i_2,\dots,i_n)).\]

Now, the $0$-surgery on $\mu_1'$ makes each $\bar A_i$ compressible in $Y$; in particular,  each $\bar A_i\subset Y$ is homologous to the disjoint union of two tori \[T_i^1\cup T_i^2\subset Y.\] One of these tori, say $T_i^1$, is the extension $\bar D_i\subset Y$ of $D_i\subset -M_{T''}$ that is used to define the grading on $\SHI(-M_{T''},-\gamma_{T''})$ associated to $D_i$. Since \[\bar A_i=\bar D_i+T_i^2\] in $H_2(Y)$, the $k$-eigenspace of $\mu(\bar A_i)$ agrees with the $k$-eigenspace of $\mu(\bar D_i)$ for every $k$, by  \cite[Corollary 2.9]{bs-trefoil}. Thus, we have that \begin{align*}&\SHI(-M_{T''},-\gamma_{T''},(A_2,\dots,A_n), (i_2,\dots,i_n)) \\
=\,&\SHI(-M_{T''},-\gamma_{T''},(D_2,\dots,D_n), (i_2,\dots,i_n)).\end{align*} Putting these arguments together, we see that $F_{m+1}$ preserves the $\Z^{n-1}$-gradings as claimed in the lemma.
\end{proof}

Note that decomposing $(-M_{T''},-\gamma_{T''})$ along  $D_2\cup \dots \cup D_n$ yields  $(-M,-\gamma)$. By \cite[Lemma 4.2]{li}, we therefore have  \[\SHI(-M_{T''},-\gamma_{T''},(D_2,\dots,D_n), (0,\dots,0))\cong \SHI(-M,-\gamma).\] Hence, for $m$ sufficiently large, Lemmas \ref{lem:tri1} and \ref{lem:tri2} imply that that \begin{align} 
\label{eqn:2.1}\dim_\C\SHI(-M,-\gamma) =&\dim_\C\SHI(-M_{T'},-\Gamma_{m+1},(A_2,\dots,A_n), (0,\dots,0))\\
\nonumber&-\dim_\C\SHI(-M_{T'},-\Gamma_{m},(A_2,\dots,A_n), (0,\dots,0)),
\end{align}
since $G_m\equiv 0$.

Next, we consider attaching a bypass to $(-M_{T'},-\Gamma_{m+1})$ along the arc $\eta_-$  in Figure \ref{fig:annulus2}. By \cite[Section 4]{bs-trefoil}, this attachment gives rise to a bypass exact triangle. As discussed in \cite[Section 3]{li-ye}, the other two sutures involved in the triangle are $-\Gamma_m$ and $ -\gamma_{T'}$. It is straightforward to check that the bypass attachment along $\eta_-$ creates a \emph{negative stabilization} \[A_i^-\subset (-M_{T'},-\gamma_{T'})\] of $A_i$, for each $i=2,\dots,n$, in the sense of \cite[Definition 3.1]{li}. Hence, as in the proof of \cite[Proposition 5.5]{li}, we have  the following graded version of the bypass exact triangle of \cite[Theorem 1.20]{bs-trefoil},
\begin{equation*}\label{eqn:tri} \xymatrix@C=-96pt@R=35pt{
\SHI(-M_{T'},-\Gamma_m, (A_2,\dots,A_n), (0,\dots,0)) \ar[rr] & & \SHI(-M_{T'},-\Gamma_{m+1},(A_2,\dots,A_n), (0,\dots,0)) \ar[dl] \\
& \SHI(-M_{T'},-\gamma_{T'},(A_2^-,\dots,A_n^-), (0,\dots,0)), \ar[ul] & \\
} \end{equation*} which implies that 
\begin{align} 
\label{eqn:2.2}&\dim_\C\SHI(-M_{T'},-\gamma_{T'},(A_2^-,\dots,A_n^-), (0,\dots,0))\\
 \nonumber\geq&\dim_\C\SHI(-M_{T'},-\Gamma_{m+1},(A_2,\dots,A_n), (0,\dots,0))\\
\nonumber&-\dim_\C\SHI(-M_{T'},-\Gamma_{m},(A_2,\dots,A_n), (0,\dots,0)).
\end{align}
From the grading shifting property \cite[Theorem 1.12]{li} and \cite[Proposition 4.1]{wang}, we have \begin{align}
\label{eqn:2.3}&\SHI(-M_{T'},-\gamma_{T'},(A_2^-,\dots,A_n^-), (0,\dots,0)) \\
\nonumber =\,\, &\SHI(-M_{T'},-\gamma_{T'},(A_2^+,\dots,A_n^+), (-1,\dots,-1)),
\end{align} where $A_i^+$ is a \emph{positive stabilization} of $A_i$. Moreover, from the construction of the gradings and stabilizations in \cite[Section 3]{li}, we have 
\begin{align}
\label{eqn:2.4}&\SHI(-M_{T'},-\gamma_{T'},(A_2^+,\dots,A_n^+), (-1,\dots,-1)) \\
\nonumber =\,\, &\SHI(-M_{T'},-\gamma_{T'},(-(A_2^+),\dots,-(A_n^+)), (1,\dots,1))\\
\nonumber =\,\, &\SHI(-M_{T'},-\gamma_{T'},((-A_2)^-,\dots,(-A_n)^-), (1,\dots,1)).
\end{align} By \cite[Lemma 4.2]{li}, this last group is isomorphic to the sutured instanton homology of the manifold obtained from $(-M_{T'},-\gamma_{T'})$ by decomposing along $(-A_2)^-\cup \dots\cup(-A_n)^-$. By \cite[Lemma 3.2]{li}, this is the same as the manifold obtained by decomposing along $-A_2\cup \dots\cup-A_n$, which is, after reversing orientation,  the manifold obtained from $(M_{T'},\gamma_{T'})$ by decomposing along $A_2\cup \dots\cup A_n$. It is straightforward to check that the latter manifold  is simply $(M_T,\gamma_T)$, as indicated in Figure \ref{fig:annulus3} in the case $n=2$. Thus,\begin{equation}
\label{eqn:2.5}\SHI(-M_{T'},-\gamma_{T'},((-A_2)^-,\dots,(-A_n)^-), (1,\dots,1)) \cong \SHI(-M_T,-\gamma_T).
\end{equation}

\begin{figure}[ht]
\labellist
\tiny \hair 2pt
\pinlabel $M_{T}$ at 210 105

\pinlabel $\mu_1$ at 52 51
\pinlabel $\mu_2$ at 112 77

\pinlabel $\gamma$ at 187 83
\pinlabel $A_2$ at 94 92
\endlabellist
\centering
\includegraphics[width=6cm]{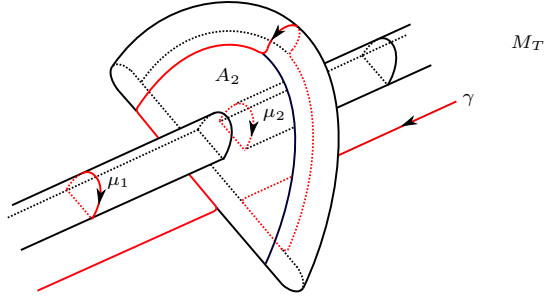}
\caption{The result of decomposing $(M_{T'},\gamma_{T'})$ along $A_2\cup \dots \cup A_n$ is simply $(M_T,\gamma_T)$. This is illustrated above in the case $n=2$.}
\label{fig:annulus3}
\end{figure}

Finally, combining \eqref{eqn:2.1}--\eqref{eqn:2.5}, we have that
\begin{align*}
\dim_\C\SHI(-M_T,-\gamma_T)=&\dim_\C\SHI(-M_{T'},-\gamma_{T'},((-A_2)^-,\dots,(-A_n)^-), (1,\dots,1))\\
=&\dim_\C\SHI(-M_{T'},-\gamma_{T'},(A_2^-,\dots,A_n^-), (0,\dots,0)) \\
\geq&\dim_\C\SHI(-M_{T'},-\Gamma_{m+1},(A_2,\dots,A_n), (0,\dots,0))\\
&-\dim_\C\SHI(-M_{T'},-\Gamma_{m},(A_2,\dots,A_n), (0,\dots,0))\\
=&\dim_\C\SHI(-M,-\gamma).
\end{align*}
Given the symmetry of $\SHI$ under orientation reversal, this proves Theorem \ref{thm:inequality2}.
\end{proof}

\section{Proof of Theorem \ref{thm:main}}\label{sec:proof} 

\subsection{Full tangles}\label{ssec:full}
 Let $(M,\gamma)$ be a balanced sutured manifold. Let \[\Heeg = \big(\Sigma,\alpha = \{\alpha_1,\dots,\alpha_k\},\beta = \{\beta_1,\dots,\beta_k\}\big)\] be any (not necessarily admissible) sutured Heegaard diagram for $(M,\gamma)$. This means that  $M$ is  obtained from $\Sigma\times[-1,1]$ by attaching  3-dimensional 2-handles \begin{align*}
 \D_{\alpha_i} &=D_{\alpha_i}^2\times I\\
 \D_{\beta_i} &=D_{\beta_i}^2\times I
 \end{align*}
 along  $A_i\times\{-1\}$ and $B_i\times\{+1\}$, where $A_i$ and $B_i$ are annular neighborhoods of $\alpha_i$ and $\beta_i$, respectively, for $i=1,\dots,k$. The suture $\gamma$  is given by \[\gamma = \partial \Sigma\times \{0\}.\footnote{This construction describes $(M,\gamma)$ up to homeomorphism, which is all we are concerned with.}\] We will next define a special class of vertical tangles in $(M,\gamma)$ associated to $\Heeg$.
 
Let $R_1,\dots,R_n$ be the regions of $\Sigma-\alpha-\beta$ disjoint from $\partial \Sigma$. For each $i=1,\dots,n$,  let $p_{i1},\dots,p_{ia_i}$ be $a_i$ distinct points in  $R_i$, for some integer $a_i\geq 1$. Let \[T_{ij} = p_{ij}\times[-1,1]\subset \Sigma\times[-1,1],\] and let \[T =  \bigcup_{i=1}^n \bigcup_{j=1}^{a_i} T_{ij}.\] Then $T$ is a vertical tangle in $(M,\gamma)$, oriented from $R_+(\gamma)$ to $R_-(\gamma)$. Let $D_{ij}^2$ be a tubular neighborhood of the point $p_{ij}\in R_i$,  let \[N_{ij} =D^2_{ij}\times [-1,1]\] be a tubular neighborhood of the component $T_{ij}$ in $M$, and let \[\gamma_{ij} = \partial D^2_{ij}\times \{0\}\subset \partial N_{ij}\] be a positively-oriented meridian of this component; see Figure \ref{fig:tangle}. Let $(M_T, \gamma_T)$ be the  balanced sutured manifold obtained from $M$ by removing these tubular neighborhoods, \[M_T = M-\bigcup_{i=1}^n\bigcup_{j=1}^{a_i}N_{ij},\] where $\gamma_T$ is the union of $\gamma$ with meridians of the $T_{ij}$, \[\gamma_T = \gamma\cup \bigcup_{i=1}^n\bigcup_{j=1}^{a_i}\gamma_{ij},\] as in \S\ref{sec:inequality}. 
We  refer to any tangle obtained in this way as a \emph{full tangle for $\Heeg$}.

\begin{figure}[ht]
\labellist
\tiny \hair 2pt
\pinlabel $1$ at 10 85
\pinlabel $-1$ at 5 44
\pinlabel $T_{ij}$ at 136 74
\pinlabel $\gamma_{ij}$ at 348 74
\endlabellist
\centering
\includegraphics[width=11cm]{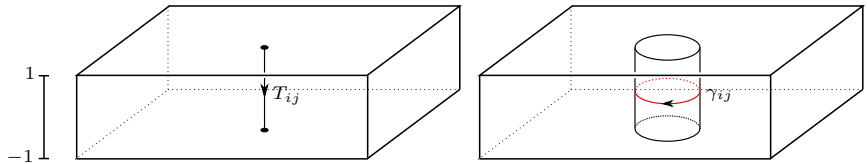}
\caption{Left, the component $T_{ij}=p_{ij}\times[-1,1]\subset M$. Right, the complement of $N_{ij}$ with the meridian $\gamma_{ij}$ in red.}
\label{fig:tangle}
\end{figure}

The main result of this section is the following.

\begin{proposition}
\label{prop:gens}
If $T$ is a full tangle for $\Heeg$, then $\dim_\C\SHI(M_T,\gamma_T) = |\gen(\Heeg)|.$
\end{proposition}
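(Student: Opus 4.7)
The plan is to recognize $(M_T,\gamma_T)$ intrinsically as a sutured manifold with a very restrictive Heegaard-diagrammatic description, and then to reduce the computation of $\SHI(M_T,\gamma_T)$ to that of a disjoint union of product sutured $3$-balls.

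First, I would note that $(M_T,\gamma_T)$ itself admits a sutured Heegaard diagram $\Heeg^* = (\Sigma^*,\alpha,\beta)$, where $\Sigma^* := \Sigma \setminus \bigsqcup_{i,j}\inr(D^2_{ij})$ and the $\alpha$ and $\beta$ curves are unchanged.  Concretely, the description of $M_T$ as $\Sigma^*\times[-1,1]$ with $2$-handles $\D_{\alpha_i}$ and $\D_{\beta_i}$ attached along $\alpha_i \times \{-1\}$ and $\beta_i \times \{+1\}$, and the description of $\gamma_T = \gamma \cup \bigsqcup_{i,j}\gamma_{ij}$ as $\partial\Sigma^*\times\{0\}$, are exactly Juh\'asz's construction applied to $\Heeg^*$.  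Since the attaching curves are unchanged, $\gen(\Heeg^*) = \gen(\Heeg)$.  The defining feature of a \emph{full} tangle then translates into the key combinatorial property of $\Heeg^*$: every region of $\Sigma^* - \alpha - \beta$ touches $\partial\Sigma^*$, because every formerly interior region $R_i$ of $\Sigma - \alpha - \beta$ has acquired at least one puncture.

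Second, I would leverage this ``boundary-adjacency'' property of $\Heeg^*$ to decompose $(M_T,\gamma_T)$ via a sequence of product disk decompositions built from the $\alpha$- and $\beta$-handles.  For each $\alpha_i$, the co-core of $\D_{\alpha_i}$ is an arc in the $2$-handle with endpoints on $R_-(\gamma_T)$, which I would extend by arcs in $\Sigma^*\times\{-1\}$ to a properly embedded disk whose boundary meets $\gamma_T$ transversely in two points.  The boundary-adjacency of every region of $\Sigma^* - \alpha - \beta$ is precisely what allows these completing arcs to be routed disjointly into nearby components of $\partial\Sigma^*\times\{0\}$.  Each resulting disk is a product disk, so the decomposition preserves $\SHI$ by \cite{km-excision,bs-instanton}; the same game is played for each $\beta_i$.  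After exhausting all $2k$ handles, I expect the resulting sutured manifold to be a disjoint union of product balls $(B^3,\delta)$, one for each way of consistently pairing an $\alpha$-handle with a $\beta$-handle through an intersection point of the $\alpha$ and $\beta$ curves on $\Sigma^*$, i.e., one for each element of $\T_\alpha \cap \T_\beta = \gen(\Heeg)$.  Combined with $\SHI(B^3,\delta) \cong \C$ and the additivity of $\SHI$ under disjoint union, this would yield the proposition.

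The main obstacle will be the combinatorial bookkeeping in this last step: verifying that the successive decompositions produce precisely $|\gen(\Heeg)|$ product balls, and not some other count.  I would expect to handle this by induction on $k$, where peeling off one $\alpha_i$-handle splits the manifold into components indexed by the intersection points of $\alpha_i$ with the $\beta$ curves, each component inheriting a full-tangle Heegaard diagram on fewer curves with the boundary-adjacency property preserved, so that the inductive hypothesis applies.
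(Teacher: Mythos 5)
Your first step is fine and matches the paper's setup (the observation that puncturing every interior region of $\Sigma-\alpha-\beta$ yields a diagram $\Heeg^*$ for $(M_T,\gamma_T)$ with the same generators is exactly Remark \ref{rmk:SFH}), but the second step contains a genuine gap that cannot be repaired within the framework you propose. Product disk decomposition induces an \emph{isomorphism} on $\SHI$, and $\SHI$ of a disjoint union is \emph{multiplicative} (a tensor product), not additive. So if a sequence of product disk decompositions really terminated in a disjoint union of product sutured balls, you would conclude $\dim_\C\SHI(M_T,\gamma_T)=1$, never $|\gen(\Heeg)|$. Relatedly, a single sequence of cuts produces a single sutured manifold whose components are determined by the topology; there is no mechanism by which "one product ball for each consistent pairing of $\alpha$- and $\beta$-handles" can appear, and your inductive claim that cutting along one co-core disk "splits the manifold into components indexed by the intersection points of $\alpha_i$ with the $\beta$ curves" has the same problem: a cut does not branch over choices.

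The missing idea is a decomposition result that produces a \emph{direct sum over choices}, namely Proposition \ref{prop:decompose} (after Ghosh--Li): if $D$ is a properly embedded disk meeting the suture in \emph{four} points, then $\SHI(M,\gamma)\cong\SHI(M',\gamma')\oplus\SHI(M'',\gamma'')$, where the two summands come from decomposing along $D$ and along $-D$. The paper first uses product disks $d_e\times[-1,1]$ (chosen so that the complement of the arcs $d_e$ and the punctures retracts onto $\alpha\cup\beta$) to reduce $(M_T,\gamma_T)$ to the $2k$ handle-balls joined by one tube $\tau_\ell=r_\ell\times[-1,1]$ for each intersection point $q_\ell\in\alpha\cap\beta$. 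Each meridional disk $m_\ell$ of a tube meets $\gamma_T$ in four points, so applying Proposition \ref{prop:decompose} to all $t$ tubes yields a direct sum over the $2^t$ sign choices $I\in\{+,-\}^t$; each summand is a union of $3$-balls, it is nonzero (and then one-dimensional) exactly when every ball carries a single suture component, and that happens exactly when the set of $-$-labelled intersection points selects one point on each $\alpha_i$ and each $\beta_j$, i.e.\ an element of $\T_\alpha\cap\T_\beta$. Without the four-point direct-sum decomposition, the count $|\gen(\Heeg)|$ cannot be produced.
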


\begin{remark}
\label{rmk:SFH}
The analogue of this proposition for $\SFH$ is immediate. One forms a sutured Heegaard diagram $\Heeg_T$ for $(M_T,\gamma_T)$ from $\Heeg$ by removing neighborhoods  of the  $p_{ij}\in \Sigma$. Since there is at least one such point in every region of $\Sigma-\alpha-\beta$ not intersecting $\partial\Sigma$,  this ensures that $\Heeg_T$ is admissible and that the differential on $\SFC(\Heeg_T)$ is zero, so that \[\rk_\Z\SFH(M_T,\gamma_T) = \rk_\Z\SFC(\Heeg_T) = |\gen(\Heeg_T)| = |\gen(\Heeg)|.\] This was the inspiration for our result above.
\end{remark}

We will need the following  for the proof of Proposition \ref{prop:gens}; see \cite[Corollary 4.3]{ghosh-li}.

\begin{proposition}
\label{prop:decompose}
Suppose $(M,\gamma)$ is a balanced sutured manifold and $D\subset M$ is a properly embedded disk which intersects $\gamma$ in four points. Then \[\SHI(M,\gamma) \cong \SHI(M',\gamma')\oplus \SHI(M'',\gamma''),\] where $(M',\gamma')$ and $(M'',\gamma'')$ are the decompositions of $(M,\gamma)$ along $D$ and $-D$, respectively.
\end{proposition}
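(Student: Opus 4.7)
My plan is to prove Proposition \ref{prop:decompose} via the bypass exact triangle of Baldwin--Sivek combined with the invariance of sutured instanton homology under product disk decomposition \cite{km-excision, bs-instanton}. The idea is to realize the two surface decompositions $(M',\gamma')$ and $(M'',\gamma'')$ as product disk decompositions of $(M,\gamma^+)$ and $(M,\gamma^-)$, where $\gamma^\pm$ are two sutures obtained from $\gamma$ by bypass moves whose bypass triangle involves $(M,\gamma)$ itself as the third term.

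Concretely, I would first choose a bypass arc $\eta \subset \partial M$ positioned in a neighborhood of two adjacent intersection points of $\partial D$ with $\gamma$. A bypass attachment along $\eta$ produces two possible new sutures $\gamma^+$ and $\gamma^-$, which together with $\gamma$ fit into the Baldwin--Sivek bypass triangle
\[
\xymatrix@C=-10pt@R=25pt{
\SHI(-M,-\gamma) \ar[rr] & & \SHI(-M,-\gamma^+) \ar[dl] \\
& \SHI(-M,-\gamma^-). \ar[ul] & \\
}
\]
By placing $\eta$ so that the bypass move resolves a chosen pair of the four points $\partial D \cap \gamma$ into a ``product configuration'' while leaving the other pair alone, I can arrange that $D \subset (M,\gamma^+)$ and $D \subset (M,\gamma^-)$ are each product disks (intersecting the new suture in exactly two points), with the two resolutions corresponding to the two possible orientations $\pm D$. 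Decomposing along the product disks then produces precisely $(M',\gamma')$ and $(M'',\gamma'')$, and by the invariance of $\SHI$ under product disk decomposition I get
\[\SHI(M,\gamma^+) \cong \SHI(M',\gamma'), \qquad \SHI(M,\gamma^-) \cong \SHI(M'',\gamma'').\]

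The main obstacle is to show that the triangle splits as a direct sum, i.e.\ that one of the three connecting maps vanishes. I expect to achieve this by placing a second, parallel bypass arc $\eta'$ in $\partial M$ on the opposite side of $\gamma$ from $\eta$ and comparing the two resulting bypass triangles. Since two successive bypasses along parallel arcs from opposite sides cancel, the composition of the corresponding two bypass maps lands in a sutured manifold carrying a trivial factor (a contact $1$-handle followed by a trivial decomposition), forcing one of the maps in the original triangle to factor through zero. Once one map in the triangle vanishes, the resulting short exact sequence of $\C$-vector spaces splits automatically, yielding
\[\SHI(M,\gamma) \cong \SHI(M,\gamma^+) \oplus \SHI(M,\gamma^-) \cong \SHI(M',\gamma') \oplus \SHI(M'',\gamma''),\]
which is the desired decomposition.

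If the bypass-cancellation argument for vanishing proves too delicate, a backup is to argue directly at the level of closures: choose a closure $(Y,R,\omega)$ for $(-M,-\gamma)$ in which $D$ extends to a closed surface $\bar D \subset Y$ of genus two obtained by capping off $D_+$ and $D_-$ in the auxiliary piece, and use the action of $\mu(\bar D)$ on $\SHI(-M,-\gamma)$ (as in \cite[Section 3]{li} and \cite{ghosh-li}) to split $\SHI(M,\gamma)$ into generalized eigenspaces whose eigenvalues distinguish the two decompositions; a grading--shift / decomposition identification as in \cite[Lemma 4.2]{li} then matches the two eigenspaces with $\SHI(M',\gamma')$ and $\SHI(M'',\gamma'')$ respectively.
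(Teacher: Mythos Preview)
Your bypass-triangle setup is correct and is exactly the framework the paper uses: the two other sutures $\gamma^+,\gamma^-$ in the triangle do indeed product-disk-decompose along $D$ to $(M',\gamma')$ and $(M'',\gamma'')$, so $\SHI(M,\gamma^\pm)\cong\SHI(M',\gamma')$, $\SHI(M'',\gamma'')$. The divergence is in how the splitting is obtained.

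The paper does \emph{not} attempt to show directly that a map in the triangle vanishes. Instead it cites Ghosh--Li \cite[Corollary~4.3]{ghosh-li}, which already proves the proposition under the hypothesis that $(M,\gamma)$ is taut and at least one of $(M',\gamma')$, $(M'',\gamma'')$ is taut; that argument is essentially your backup plan (grading by the extended surface $\bar D$ and identifying the extremal graded pieces via \cite[Lemma~4.2]{li}). The paper's only new contribution is to dispose of the non-taut cases, and there the argument is trivial: if $(M,\gamma)$ is not taut then all three groups vanish; if neither decomposition is taut then $\SHI(M,\gamma^\pm)=0$ and the triangle forces $\SHI(M,\gamma)=0$ as well. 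No map needs to be shown to vanish in any nontrivial sense.

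The genuine gap in your proposal is the ``parallel bypass cancellation'' step. As stated it is not a proof: two bypasses along parallel arcs from opposite sides of $\gamma$ do return you to $(M,\gamma)$ up to isotopy, but this only says a certain composite of two bypass maps is (up to a unit) the identity, not that any single map in the triangle is zero. Getting an honest vanishing statement out of this would require a grading or filtration argument---which is precisely what Ghosh--Li supply and what your backup sketches. So either carry out the backup in full (in which case you are reproving \cite[Corollary~4.3]{ghosh-li} and will still need to address the tautness hypotheses used in \cite[Lemma~4.2]{li}), or, as the paper does, cite Ghosh--Li for the taut case and handle the degenerate cases separately where the triangle collapses for free.
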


\begin{proof}
Ghosh and Li prove this in \cite[Corollary 4.3]{ghosh-li} under the assumption that $(M,\gamma)$ is taut and at least one of $(M',\gamma')$ and $(M'',\gamma'')$ is taut. We will show that this additional assumption is unnecessary, by showing that the proposition still holds when the assumption is not true.

First,  suppose $(M,\gamma)$ is not taut. Then neither $(M',\gamma')$ nor $(M'',\gamma'')$ is taut, by \cite[Lemma 0.4]{gabai-foliations2}. In this case, we have \[\SHI(M,\gamma) \cong \SHI(M',\gamma')\cong \SHI(M'',\gamma'') =0,\] and the proposition holds. 

\begin{figure}[ht]
\labellist
\small \hair 2pt
\pinlabel $D$ at 6 71
\pinlabel $(M,\gamma)$ at 150 18
\pinlabel $(M,\gamma_1')$ at 246 18
\pinlabel $(M,\gamma_1'')$ at 340 18
\endlabellist
\centering
\includegraphics[width=10.5cm]{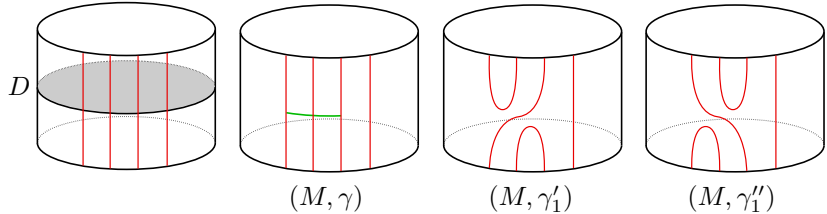}
\caption{Left, a neighborhood of $D\subset M$; the suture $\gamma$ is shown in red. Second to left, the arc of attachment for the initial bypass in the triangle in green.}
\label{fig:bypass}
\end{figure}

Next, suppose neither $(M',\gamma')$ nor $(M'',\gamma'')$ is taut. Consider the bypass exact triangle of \cite{bs-trefoil}: \[ \xymatrix@C=-35pt@R=30pt{
\SHI(-M,-\gamma) \ar[rr] & & \SHI(-M,-\gamma'_1) \ar[dl] \\
& \SHI(-M,-\gamma''_1), \ar[ul] & \\
} \]
determined by an initial bypass attachment to $(M,\gamma)$ along an arc in $\partial D$ as shown in Figure \ref{fig:bypass}. The other manifolds $(M,\gamma'_1)$ and $(M,\gamma''_1)$ in the triangle  product disk decompose (along a copy of $D$ which intersects the new sutures in two points) to $(M',\gamma')$ and $(M'',\gamma'')$, respectively. Therefore, since $\SHI$ is invariant under product disk decomposition, we have \begin{align*}
\SHI(M,\gamma'_1)&\cong \SHI(M',\gamma')\cong 0,\\
\SHI(M,\gamma''_1)&\cong \SHI(M'',\gamma'')\cong 0.
\end{align*} It then follows from the bypass triangle, and the symmetry of $\SHI$ under orientation reversal, that $\SHI(M,\gamma)=0$ as well, so the proposition holds.
\end{proof}

\begin{proof}[Proof of Proposition \ref{prop:gens}]

Recall that $D_{ij}^2$ denotes  a tubular neighborhood of $p_{ij}\in R_i$. There exists a (possibly empty) set of disjoint, properly embedded arcs \[d_1,\dots,d_m\subset \Sigma-\bigcup_{i=1}^{n}\bigcup_{j=1}^{a_i} D_{ij}\] which satisfy the following three conditions:
\begin{enumerate}
\item for every $e=1,\dots,m$, the arc $d_e$ is contained in some region of $\Sigma-\alpha-\beta$, 
\item for every $e$, either both endpoints of $d_e$ are  on $\partial\Sigma$, or each is on some $\partial D_{ij}^2,$ and
\item $\Sigma-\bigcup_{i=1}^{n}\bigcup_{j=1}^{a_i} D_{ij} - d_1-\dots - d_m$ deformation retracts onto $\alpha\cup\beta$.
\end{enumerate} Now consider the disk \[\delta_e = d_e\times[-1,1]\subset \Big(\Sigma-\bigcup_{i=1}^{n}\bigcup_{j=1}^{a_i} D_{ij} \Big)\times [-1,1] \subset \partial M_T,\] for $e=1,\dots,m$. The boundary of each $\delta_e$ intersects  $\gamma_T$ in  two points; hence, $\delta_e$ is a product disk. 
Since $\SHI$ is invariant under product disk decomposition, let $(M_T,\gamma_T)$ henceforth refer to the balanced sutured manifold obtained after   decomposing along  $\delta_1,\dots,\delta_m$.

$M_T$ then admits the following description.  Let $q_1,\dots,q_t$ denote the intersection points between the $\alpha$ and $\beta$ curves. If $q_\ell$ is an intersection point between $\alpha_i$ and $\beta_j$, let $r_\ell\subset \Sigma$ denote the rectangular component of $A_i\cap B_j$ which contains $q_\ell$. Then $M_T$ is the given by the union \[M_T = \D_{\alpha_1}\cup \dots \cup \D_{\alpha_k} \cup \D_{\beta_1}\cup \dots \cup \D_{\beta_k} \cup \tau_1  \cup\dots \cup \tau_t \] of the usual 2-handles with the \emph{tubes} \[\tau_\ell = r_\ell\times[-1,1],\] as shown in Figure \ref{fig:tubes}. Let $c_\ell$ denote the union of the four corners of the rectangle $r_\ell$. Then the suture $\gamma_T$ is given by \[\gamma_T = \bigcup_{i=1}^k (\partial D_{\alpha_i}^2 \times \partial I)\cup \bigcup_{i=1}^k (\partial D_{\beta_i}^2 \times \partial I) \cup \bigcup_{\ell=1}^t (c_\ell \times[-1,1]) - \bigcup_{\ell=1}^t (\partial r_\ell\times\{-1,1\})\] as shown and oriented in the figure near a tube $\tau_\ell$. Let \[m_\ell = r_\ell\times\{0\}\subset \tau_\ell\] denote the meridional disk of  $\tau_\ell$, oriented as in Figure \ref{fig:tubes}, for $\ell=1,\dots,t$. Note that the boundary of each  $m_\ell$ intersects the suture $\gamma_T$ in four points.

\begin{figure}[ht]
\labellist
\small \hair 2pt
\pinlabel $\alpha_i$ at 85 274
\pinlabel $\beta_j$ at 39 234
\pinlabel $\Sigma$ at 85 214
\pinlabel $\D_{\beta_i}$ at 308 290
\pinlabel $\D_{\alpha_i}$ at 308 198
\pinlabel $m_\ell$ at 328 242
\pinlabel $\epsilon_\ell=-$ at 115 30
\pinlabel $\epsilon_\ell=+$ at 310 30
\endlabellist
\centering
\includegraphics[width=13.5cm]{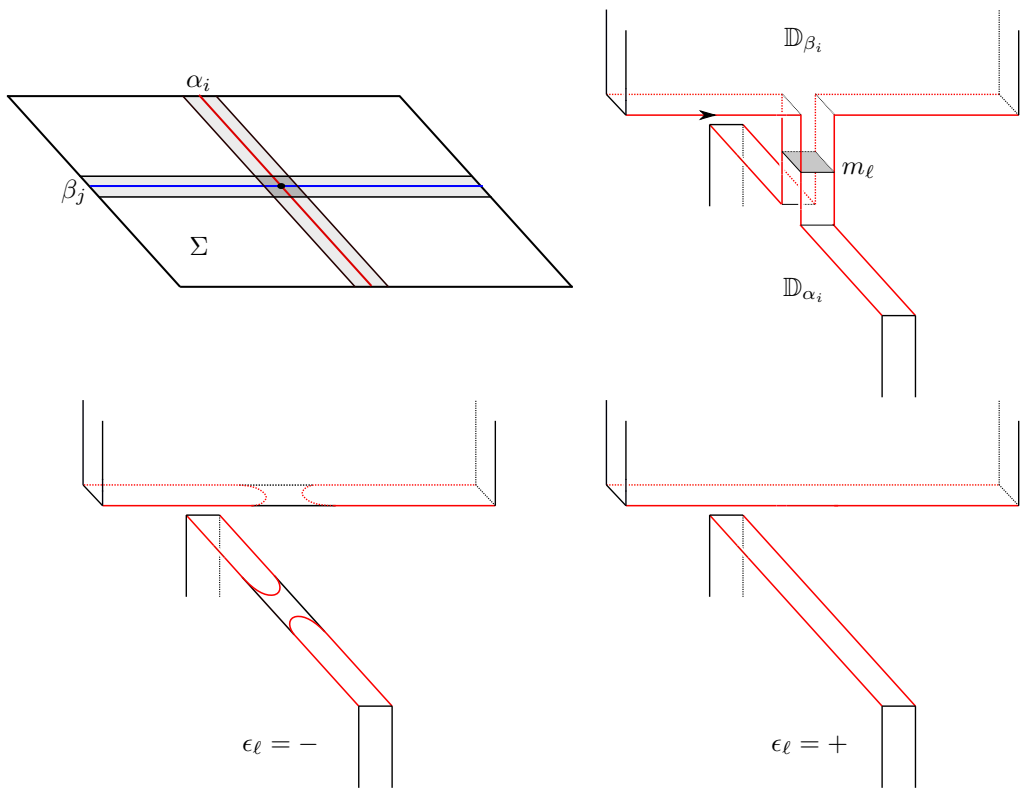}
\caption{Top left, an intersection point $q_\ell\in\alpha_i\cap \beta_j$. The rectangular component $r_\ell$ of $A_i\cap B_j$ containing $q_\ell$ is shown in darker gray. Top right, the 2-handles $\D_{\alpha_i}$ and $\D_{\beta_j}$ glued together by the tube $\tau_\ell$. The suture $\gamma_T$ is shown in red. The meridional disk $m_\ell$ is shown in  gray; its oriented normal points upwards. Bottom left, the result of decomposing along $-m_\ell$. Bottom right, the result of decomposing along $m_\ell$.}
\label{fig:tubes}
\end{figure}

For each $t$-tuple of signs \[I = (\epsilon_1,\dots,\epsilon_t)\in \{+,-\}^t,\] let $(M_T^I,\gamma_T^I)$ be the sutured manifold obtained by decomposing $(M_T,\gamma_T)$ along the disks \[\epsilon_1m_1 \cup \dots \cup  \epsilon_tm_t.\] Then \[\SHI(M_T,\gamma_T)\cong \bigoplus_{I \in \{+,-\}^t} \SHI(M_T^I,\gamma_T^I),\] by Proposition \ref{prop:decompose}. Each $(M_T^I,\gamma_T^I)$ is  simply a union of  3-balls\[M_T^I\cong \D_{\alpha_1}\cup \dots \cup \D_{\alpha_k}\cup\D_{\beta_1}\cup \dots \cup \D_{\beta_k},\]  which means that  $\SHI(M_T^I,\gamma_T^I)$ is either $\C$ or trivial, according as whether $\gamma_T^I$ has exactly one component on the boundary of each of these 3-balls or not. We claim that the nonzero summands $\SHI(M_T^I,\gamma_T^I)$ are in one-to-one correspondence with the elements of $\gen(\Heeg)$, which will then complete the proof. 

For this claim, we consider the restriction of $\gamma_T^I$ to the ball $\D_{\alpha_i}$. Let \[q_{\ell_1},\dots,q_{\ell_p}\in\{q_1,\dots,q_t\}\] denote the intersection points between $\alpha_i$ and $\beta$. Then $\gamma_T^I$ restricts to exactly one component on the boundary of $\D_{\alpha_i}$ iff exactly one of $\epsilon_{\ell_1},\dots \epsilon_{\ell_p}$ is $-$ and the rest are $+$. The analogous statement holds for the restriction of $\gamma_T^I$ to $\D_{\beta_j}$. Thus, if we let \[q(I) = \{q_{\ell}\mid \epsilon_\ell = -\},\] then  $\gamma_T^I$ restricts to exactly one component on each 3-ball in $M_T^I$ iff $q(I)\in \gen(\Heeg)$.
\end{proof}

\begin{remark}
Proposition \ref{prop:gens} also follows from the fact that $\SHI$ and $\SFH$ obey the same decomposition laws (Proposition \ref{prop:decompose}, and the invariance under product disk decomposition), agree in rank for sutured 3-balls, and \[\rk_\Z\SFH(M_T,\gamma_T) = |\gen(\Heeg)|,\] per Remark \ref{rmk:SFH}. Our original  proof    has the advantages that it does not rely on the definition of the differential in $\SFC$, and  it establishes a very concrete bijection between the nonzero summands $\SHI(M_T^I,\gamma_T^I)$ and elements of $\gen(\Heeg)$.
\end{remark}

\subsection{The proof}\label{ssec:admissible}

Recall that a sutured Heegaard diagram $\Heeg$ for a balanced sutured manifold $(M,\gamma)$  is \emph{admissible} iff  every nontrivial periodic domain has both positive and negative multiplicities \cite{juhasz-sutured}.  This is  automatically true of any $\Heeg$ when  $H_1(M,\partial M;\Q)=0$ (there are no nontrivial periodic domains in this case), though every balanced sutured manifold admits an admissible diagram.

\begin{proof}[Proof of Theorem \ref{thm:main}]
Let $\Heeg = (\Sigma,\alpha,\beta)$ be an admissible sutured Heegaard diagram for $(M,\gamma)$. Then we can assign a positive integer area $a_i$ to each region $R_i$ of $\Sigma-\alpha-\beta$ disjoint from $\partial \Sigma$, so that the  signed area of every periodic domain is zero; see  \cite[Lemma 4.2]{osz-props}.\footnote{Ozsv{\'a}th and Szab{\'o} state this for real-valued areas, but proof shows the same is true for integer areas.} 
Fix $a_i$ distinct points $p_{i1},\dots,p_{ia_i}\in R_i$ for each $i$, and let \[T = \bigcup_{i=1}^n\bigcup_{j=1}^{a_i} T_{ij} \subset M\] be the corresponding full tangle for $\Heeg$, as in \S\ref{ssec:full}.

 We claim that  $[T]=0$ in $H_1(M,\partial M;\Q)$. To see this, note that for every periodic domain $P$ of $\Heeg$, the intersection number  of $T$ with the 2-cycle in $M$ represented by $P$ is negative the signed area of $P$, which is zero. Since the homology classes represented by periodic domains span $H_2(M)$, the claim follows. Theorem \ref{thm:inequality2} therefore implies that \[\dim_\C\SHI(M,\gamma)\leq \dim_\C\SHI(M_{T},\gamma_{T}).\] Theorem \ref{thm:main} then follows from the fact that \[ \dim_\C\SHI(M_{T},\gamma_{T})=|\gen(\Heeg)|,\] by Proposition \ref{prop:gens}.
\end{proof}

\begin{remark}\label{rmk:admissible}It is  not true that the inequality \[\dim_\C\SHI(M,\gamma)\leq |\gen(\Heeg)|\] holds for \emph{any}  sutured Heegaard diagram $\Heeg$ for an arbitrary balanced sutured $(M,\gamma)$. For example, consider the diagram $\Heeg = (T^2-D^2,\alpha_1,\beta_1)$ for \[(M,\gamma) = ((S^1\times S^2)(1),\delta)\] in which $\alpha_1$ and $\beta_1$ are disjoint curves on the punctured torus. In this case, we know that \[\dim_\C\SHI(M,\gamma) = 2,\] while $ |\gen(\Heeg)|=0$. The issue here is that  $\Heeg$  is not admissible.
\end{remark}

\section{Further directions}\label{sec:further}
Let $\Heeg$ be an admissible sutured Heegaard diagram for a balanced sutured manifold $(M,\gamma)$. Let $T$ be a full tangle for $\Heeg$, as defined in \S\ref{sec:proof}. In ongoing work, we prove that \[\SHI(-M_T,-\gamma_T)\cong\C^{|\gen(\Heeg)|}\] has  a basis given by the contact invariants  of the tight contact structures on $(M_T,\gamma_T)$. In particular,  this sutured instanton homology group is  naturally graded by homotopy classes of 2-plane fields. We discuss potential applications of this fact below.

Let $T'$ be a mixed tangle for  $T$, as defined in \S\ref{sec:inequality}. Let $V_{T}$ and $V_m$ be the groups    \begin{align*}
V_T&=\SHI(-M_{T'},-\gamma_{T'},(A_2^-,\dots,A_n^-), (0,\dots,0)),\\
V_m&=\SHI(-M_{T'},-\Gamma_m, (A_2,\dots,A_n), (0,\dots,0))
\end{align*}  from \S\ref{sec:inequality}, for $m\in \mathbb{N}$. To prove Theorem \ref{thm:main}, we proved that $V_T\cong \SHI(-M_T,-\gamma_T)$ and  \[\dim_\C \SHI(-M,-\gamma) = \dim_\C V_{m+1} - \dim_\C V_m\leq \dim_\C V_T\] for sufficiently large $m$. But, in fact, this inequality can be viewed as coming from a spectral sequence similar to that in \cite[Section 4]{li-ye}. This spectral sequence can also be described as follows. From the bypass exact triangle used in \S\ref{sec:inequality}, we have that \[V_T\cong H_*(\Cone(\psi_-:V_m\to V_{m+1})),\] where $\psi_-$ is the map associated to the bypass attachment along  the arc $\eta_-$. One can prove, on the other hand, that \[\SHI(-M,-\gamma)\cong H_*(\Cone(\psi_--\psi_+:V_m\to V_{m+1}))\] for $m$ large, where $\psi_+$ is a related bypass attachment map. The groups $V_m$ and $V_{m+1}$ can be graded using the rational Seifert surface for $T'$, as in \cite{li}. After adjusting this grading by an overall shift, the map $\psi_-$ is grading-preserving while $\psi_+$ decreases the grading by $1$, for $m$ large. The complex $\Cone(\psi_--\psi_+)$ is then filtered, and the $E_1$ page of the associated spectral sequence is $H_*(\Cone(\psi_-))$. In sum, we have a spectral sequence \begin{equation}\label{eqn:ss}\C^{|\gen(\Heeg)|}\cong V_T\cong H_*(\Cone(\psi_-)) \implies H_*(\Cone(\psi_--\psi_+))\cong \SHI(-M,-\gamma).\end{equation}

The first potential application of these ideas involves defining a grading on $\SHI(-M,-\gamma)$ by homotopy classes of plane fields. Indeed, $\SHI(-M_T,-\gamma_T)$ has such a grading, as mentioned above, as it is generated by contact invariants of contact structures. The manifold $(M_{T'},\gamma_{T'})$ is obtained by gluing $(M_T,\gamma_T)$ along annuli, as in \S\ref{sec:inequality}, and we believe that the tight contact structures on the latter glue to give tight contact structures on the former whose invariants form a basis for $V_T$. So, there should be a natural grading by homotopy classes of 2-plane fields on $V_T$ as well. The bypass maps $\psi_-,\psi_+$ are natural from a contact-geometric standpoint, and should therefore shift  plane field gradings in a sensible way. We expect that one can then use the relation between $V_T$ and $\Cone(\psi_-)$ and the structure of the latter to define a plane field grading on $\Cone(\psi_-)$, and then on $\Cone(\psi_--\psi_+)$.

A  grading by homotopy classes of 2-plane fields on $\SHI$ would  enable one to define $\textrm{Spin}^c$ decompositions of these groups, as well as an analogue of the Maslov grading in Heegaard Floer homology (see \cite[Section 4]{li-ye} for another approach to such a decomposition). The current lack of such structure makes it difficult to translate arguments from the Heegaard Floer setting to the instanton Floer setting.

A related second application is towards proving the isomorphism \eqref{eqn:iso1}.
Indeed, there is some hope that one could understand the spectral sequence \eqref{eqn:ss} purely in terms of contact geometry, and thereby obtain a more axiomatic proof that  \begin{equation}\label{eqn:isoall}\SHI(M,\gamma) \cong \SFH(M,\gamma)\otimes \C\cong \SHM(M,\gamma)\otimes \C,\end{equation} since the analogous spectral sequences can be defined in the Heegaard Floer and monopole Floer settings by the same contact-geometric means. 

A more pedestrian approach to \eqref{eqn:isoall} is the following: first, prove that one can understand the spectral sequence \[\C^{|\gen(\Heeg)|}\implies \SHI(-M,-\gamma)\] as coming from a differential \[\partial=\partial_1 + \partial_2 + \dots:\C^{|\gen(\Heeg)|}\to \C^{|\gen(\Heeg)|},\] where $\partial_k$ shifts a grading (coming from homotopy classes of 2-plane fields) on $\C^{|\gen(\Heeg)|}\cong V_T$ by $k$, such that \[\SHI(-M,-\gamma)\cong H_*(\C^{|\gen(\Heeg)|},\partial).\] Then, for generators $x,y\in\gen(\Heeg)$ and the corresponding basis elements $e_x,e_y\in\C^{|\gen(\Heeg)|}$, perhaps one could  use the 2-plane field gradings to show that the coefficient \begin{equation}\label{eqn:coeff}\langle \partial e_x,e_y\rangle \end{equation} is nonzero only if there is a homotopy class of Whitney disks \begin{equation}\label{eqn:class}\varphi\in\pi_2(x,y)\end{equation} with positive domain in $\Heeg$ and Maslov index one. If  even this were true, then one could prove, for example, that the inequality in Corollary \ref{cor:simple} is an equality, \[\dim_\C\KHI(L(p,q),K)\leq \rk_\Z\HFK(L(p,q),K)\] for  $(1,1)$-knots $K\subset L(p,q)$. More generally, the hope would be that for a \emph{nice} diagram $\Heeg$ (the regions of $\Sigma-\alpha-\beta$ disjoint from $\partial\Sigma$ are  bigons or rectangles), one could show that the coefficient \eqref{eqn:coeff} is $\pm 1$ iff there is a  class  as in \eqref{eqn:class} with positive domain and Maslov index one (the domain of such a class is necessarily an embedded bigon or rectangle in this case). This would be enough to prove \eqref{eqn:iso1} by \cite{sw}, and then \eqref{eqn:isoall} by the same methods.

\bibliographystyle{alpha}
\bibliography{References}

\end{document}